\algrenewcommand\algorithmicrequire{\textbf{Input:}}
\algrenewcommand\algorithmicensure{\textbf{Output:}}
\pgfplotsset{compat=1.18}
\titleformat{\chapter}[display]   
{\normalfont\huge\bfseries}{\chaptertitlename\ \thechapter}{20pt}{\Huge}   
\titlespacing*{\chapter}{0pt}{-30pt}{40pt}
\definecolor{LightGreen}{rgb}{0.65,0.9,0.60}
\newtheorem{theorem}{Theorem}[section]
\newtheorem{lemma}[theorem]{Lemma}
\newtheorem{corollary}[theorem]{Corollary}
\theoremstyle{definition}
\newtheorem{definition}[theorem]{Definition}
\theoremstyle{remark}
\newtheorem{remark}[theorem]{Remark}
\newtheorem{theorem-informal}[theorem]{(Informal) Theorem}
\theoremstyle{plain}
\newtheorem*{notation*}{Notation}
\definecolor{LightBlue}{rgb}{0.6,0.6,1.}
\definecolor{VeryLightBlue}{rgb}{0.8,0.8,1.}
\definecolor{Salmon}{rgb}{1.,.57,.64}
\definecolor{Olive}{rgb}{0.3,0.4,0.1}
\definecolor{DarkRed}{rgb}{0.7,0,0}
\definecolor{DarkGreen}{rgb}{0.2,0.6,0.2}
\definecolor{DarkBlue}{rgb}{0,0,0.7}
\definecolor{LightCyan}{rgb}{0.88,1,1}
\definecolor{DarkCyan}{rgb}{0.68,0.8,0.8}
\definecolor{lightgrey}{rgb}{0.8,0.8,0.8}
\definecolor{lightergrey}{rgb}{0.9,0.9,0.9}
\definecolor{verylightgrey}{rgb}{0.95,0.95,0.95}
\renewcommand{\bar}{\widebar}
\let\save@mathaccent\mathaccent
\newcommand*\if@single[3]{%
  \setbox0\hbox{${\mathaccent"0362{#1}}^H$}%
  \setbox2\hbox{${\mathaccent"0362{\kern0pt#1}}^H$}%
  \ifdim\ht0=\ht2 #3\else #2\fi
  }
\newcommand*\rel@kern[1]{\kern#1\dimexpr\macc@kerna}
\newcommand*\widebar[1]{\@ifnextchar^{{\wide@bar{#1}{0}}}{\wide@bar{#1}{1}}}
\newcommand*\wide@bar[2]{\if@single{#1}{\wide@bar@{#1}{#2}{1}}{\wide@bar@{#1}{#2}{2}}}
\newcommand*\wide@bar@[3]{%
  \begingroup
  \def\mathaccent##1##2{%
    \let\mathaccent\save@mathaccent
    \if#32 \let\macc@nucleus\first@char \fi
    \setbox\z@\hbox{$\macc@style{\macc@nucleus}_{}$}%
    \setbox\tw@\hbox{$\macc@style{\macc@nucleus}{}_{}$}%
    \dimen@\wd\tw@
    \advance\dimen@-\wd\z@
    \divide\dimen@ 3
    \@tempdima\wd\tw@
    \advance\@tempdima-\scriptspace
    \divide\@tempdima 10
    \advance\dimen@-\@tempdima
    \ifdim\dimen@>\z@ \dimen@0pt\fi
    \rel@kern{0.6}\kern-\dimen@
    \if#31
      \overline{\rel@kern{-0.6}\kern\dimen@\macc@nucleus\rel@kern{0.4}\kern\dimen@}%
      \advance\dimen@0.4\dimexpr\macc@kerna
      \let\final@kern#2%
      \ifdim\dimen@<\z@ \let\final@kern1\fi
      \if\final@kern1 \kern-\dimen@\fi
    \else
      \overline{\rel@kern{-0.6}\kern\dimen@#1}%
    \fi
  }%
  \macc@depth\@ne
  \let\math@bgroup\@empty \let\math@egroup\macc@set@skewchar
  \mathsurround\z@ \frozen@everymath{\mathgroup\macc@group\relax}%
  \macc@set@skewchar\relax
  \let\mathaccentV\macc@nested@a
  \if#31
    \macc@nested@a\relax111{#1}%
  \else
    \def\gobble@till@marker##1\endmarker{}%
    \futurelet\first@char\gobble@till@marker#1\endmarker
    \ifcat\noexpand\first@char A\else
      \def\first@char{}%
    \fi
    \macc@nested@a\relax111{\first@char}%
  \fi
  \endgroup
}
\renewcommand{\P}{%
	\@ifnextchar\bgroup%
	{\@Pwithargs}
	{\@Pnoargs}
}
\newcommand{\@Pwithargs}[1]{%
	\@ifnextchar\bgroup%
	{\@Ptwoargs{#1}}
	{\@Ponearg{#1}}
}
\newcommand{\@Pnoargs}{\mathbb{P}}
\newcommand{\@Ponearg}[1]{\mathbb{P}\left[ #1 \right]}
\newcommand{\@Ptwoargs}[2]{\mathbb{P}_{#1}\left[ #2 \right]}
\newcommand{\E}{%
	\@ifnextchar\bgroup%
	{\@Ewithargs}
	{\@Enoargs}
}
\newcommand{\@Ewithargs}[1]{%
	\@ifnextchar\bgroup%
	{\@Etwoargs{#1}}
	{\@Eonearg{#1}}
}
\newcommand{\@Enoargs}{\mathbb{E}}
\newcommand{\@Eonearg}[1]{\mathbb{E}\!\left[ #1 \right]}
\newcommand{\@Etwoargs}[2]{\underset{#1}{\mathbb{E}}\!\left[ #2 \right]}
\newcommand{\var}{\mathrm{var}}
\newcommand{\normal}{\mathcal{N}}
\newcommand{\normalof}[1]{\normal \! \left( #1 \right) }
\newcommand{\eqd}{\overset{d}{=}}
\newcommand{\reals}{\mathbb{R}}
\newcommand{\naturals}{\mathbb{N}}
\newcommand{\vv}{\bm{v}}
\newcommand{\vx}{\bm{x}}
\newcommand{\vSigma}{\boldsymbol{\Sigma}}
\renewcommand{\P}{\mathbb{P}}
\newcommand{\R}{\mathbb{R}}
\newcommand{\DD}{\mathcal{D}}
\newcommand{\vzero}{\mathbf{0}}
\newcommand{\eye}{\mathbf{I}}
\newcommand{\indicator}{\mathds{1}}
\mathchardef\mhyphen="2D
\DeclareMathSymbol{\shortcol}{\mathord}{operators}{"3A}
\newcommand{\bigO}[1]{\mathcal{O}(#1)}
\newcommand{\adj}{\Gamma}
\newcommand{\restr}[2]{{\left.\kern-\nulldelimiterspace #1 \right|_{#2}}}
\newcommand{\vbt}{\texttt{VirtualBrownianTree} }
\newcommand{\vbteval}{\texttt{VirtualBrownianTree.eval}}
\newcommand{\bridge}{\texttt{bridge}}
\newcommand{\finalinterp}{\texttt{final\_interpolation}}
\newcommand{\splitseed}{\texttt{split\_seed}}
\newcommand{\topV}[1]{{\left\lceil #1 \right\rceil}_V}
\newcommand{\botV}[1]{{\left\lfloor #1 \right\rfloor}_V}
\newcommand{\barH}{\bar{H}}
\newcommand{\barK}{\bar{K}}
\newcommand{\barY}{\bar{Y}}
\title{Single-seed generation of Brownian paths and integrals\\ for adaptive and high order SDE solvers}
\author{%
  \textbf{Andra\v{z} Jelin\v{c}i\v{c}}
  \quad\quad\quad
  \textbf{James Foster}\\
  Department of Mathematical Sciences\\
  University of Bath, UK \\
  \texttt{\{aj2382, jmf68\}@bath.ac.uk}\\
  \and
  \textbf{Patrick Kidger}\\
  Cradle.bio\\
  Zurich, Switzerland\\
  \texttt{math@kidger.site}
}
\begin{document}

\maketitle

\vspace{2cm}

\begin{abstract}
    Despite the success of adaptive time-stepping in ODE simulation, it has so far seen few applications for Stochastic Differential Equations (SDEs). To simulate SDEs adaptively, methods such as the Virtual Brownian Tree (VBT) have been developed, which can generate Brownian motion (BM) non-chronologically. However, in most applications, knowing only the values of Brownian motion is not enough to achieve a high order of convergence; for that, we must compute time-integrals of BM such as $\int_s^t W_r \, dr$. With the aim of using high order SDE solvers adaptively, we extend the VBT to generate these integrals of BM in addition to the Brownian increments. A JAX-based implementation of our construction is included in the popular Diffrax library (\textcolor{blue}{\href{https://github.com/patrick-kidger/diffrax}{github.com/patrick-kidger/diffrax}}).
    
    Since the entire Brownian path produced by VBT is uniquely determined by a single PRNG seed, previously generated samples need not be stored, which results in a constant memory footprint and enables experiment repeatability and strong error estimation. Based on binary search, the VBT's time complexity is logarithmic in the tolerance parameter $\varepsilon$. Unlike the original VBT algorithm, which was only precise at some dyadic times, we prove that our construction exactly matches the joint distribution of the Brownian motion and its time integrals at any query times, provided they are at least $\varepsilon$ apart.

    We present two applications of adaptive high order solvers enabled by our new VBT. Using adaptive solvers to simulate a high-volatility CIR model, we achieve more than twice the convergence order of constant stepping. We apply an adaptive third order underdamped (or kinetic) Langevin solver to an MCMC problem, where our approach outperforms the No U-Turn Sampler, while using only a tenth of its function evaluations.
\end{abstract}



\clearpage

\section{Introduction}
In this paper, we introduce a method for simulating Brownian motion (BM) which can power both adaptive time-stepping and high order numerical solvers for Stochastic Differential Equations (SDEs). The former needs the ability to query the Brownian path non-chronologically, and the latter requires access to integrals of BM, also known as ``Lévy areas" (see \cref{def:intro:levy_bb}). The capability to combine these is new here.

Rössler \citep{rossler2010runge} proposed a solver for additive-noise SDEs which achieves strong order $3/2$ (see \cref{def:intro:strong_order}), provided access to space-time Lévy area (see \cref{def:intro:levy_bb}). In \cite{scott2025quicsort} the authors found a solver for the underdamped/kinetic Langevin diffusion with strong order 3, provided access to space-time-time Lévy area (\cref{def:intro:levy_bb}). The aim of this paper is to enable these high order solvers to be used adaptively, which requires a new method of simulating Brownian motion and its Lévy areas.

Although adaptive solvers such as Dormand--Prince 5(4) \cite{DoPri1980} are widely used in ODE simulation \citep{burrage2004adaptive}, applying them to SDEs \citep{gaines97variable,ilie2015adaptive} is more challenging and far less common. To see why, consider the usual way to generate a BM $W$ by drawing independent Gaussian random variables $N(0, h)$ for each increment $W_{t + h} - W_t$. When an adaptive solver's internal error estimate is too large, it backtracks and redoes the last step but with a smaller step size $h' < h$. Since the error estimate depends on the value of $W_{t + h} - W_t$, it cannot be discarded, but the new sample $W_{t + h'}$ must be conditioned on $W_{t + h}$. This backtracking sometimes needs to be repeated several times. To facilitate this, non-chronological BM generators have been developed \citep{Rackauckas2017adaptive, kidger2021efficient}. One of these is the Virtual Brownian Tree (VBT) \citep{li2020scalable}, on which our method is based. 

Most methods of generating Brownian motion are query-dependent, meaning that the generated Brownian path depends both on the random seed, and the query times $s,t\in \R$ at which $W_t - W_s$ is evaluated. With the VBT, however, the initial random seed completely determines the entire Brownian path. This has the downside of requiring a pre-specified tolerance parameter $\varepsilon$ and having an $\bigO{\log \varepsilon}$ time complexity per evaluation (see \cref{sec:vbt}). However, it brings several benefits, including an $\bigO{1}$ memory cost, easier experiment repeatability, straightforward computation of solver orders, and a concise proof of correctness.

To use the aforementioned SDE solvers adaptively, we extend the VBT to additionally generate the space-time and space-time-time Lévy areas of BM. Like the VBT, our approach is query-independent and has the same computational complexity. In Theorems \ref{thm:theory:arbitrary_whk} and \ref{prop:impl:match_dist} we show that our construction exactly matches the joint distribution of the Brownian increments and Lévy areas for all query points separated by at least $\varepsilon$. For contrast, the original VBT was precise only at dyadic query times and only generated Brownian increments. Our implementation is available as part of the software package Diffrax (\textcolor{blue}{\href{https://github.com/patrick-kidger/diffrax}{github.com/patrick-kidger/diffrax}}).

\subsection{Structure of this paper}

\cref{sec:back} introduces key terminology and related work, with a detailed explanation of VBT in \cref{sec:vbt}. 

In \cref{sec:theory:interp_bm} we present our new mathematically-precise interpolation method for the Brownian-increments-only case of VBT and prove that it produces correctly distributed samples also at non-dyadic times. The rest of \cref{sec:theory} contains the theoretical results required to generate the space-time L\'{e}vy area $H$ and space-time-time L\'{e}vy area $K$ adaptively. \cref{sec:theory:chen} gives a new version of Chen's relation for the triple $(W, H, K)$. \cref{sec:theory:midpoint} gives the rule for generating these at dyadic points which is then extended to non-dyadic times in \cref{sec:theory:general_times}.

In \cref{sec:Implementation}, we condense this theory into an improved algorithm for the VBT. In addition to augmenting the VBT with  L\'{e}vy areas, we introduce a further improvement called ``Interval Normalisation" (see \cref{sec:IntervalNormalisation}), which substantially reduces numerical errors. In \cref{sec:impl:true_distn} we prove that given a mild assumption on the spacing of query points, our construction exactly matches the full pathwise joint distribution of the BM and its L\'{e}vy areas. 

In \cref{sec:application} we present two applications of the new VBT algorithm, first to the Cox-Ingersoll-Ross model, which is provably difficult to solve with constant step sizes, and second to MCMC sampling problems, where we demonstrate the performance of the third order Langevin solver when used adaptively.
\clearpage

\section{Related Work and Theoretical Background}
\label{sec:back}

\paragraph{SDE simulation} Suppose the task is to simulate an SDE of the form 
\[
dX_t = f(X_t)dt + \sum_{i=1}^d g_i(X_t)dW_t^{(i)},\ \ X_0=x_0,
\]
where the solution $X = \left( X_t \right)_{t\in[0,T]}$ takes values in $\R^e$, $W = (W^{(1)},\dots,W^{(d)})$ denotes a standard $d$-dimensional Brownian motion (BM) and $f, g_i : \R^e \rightarrow \R^e$ are suitably regular vector fields. SDE simulation comes in two flavors: weak and strong. In the former, we are interested in distributional properties of the random variable $X_t$, such as its moments; whereas in the case of strong simulation, we are given a particular path of the BM $w: [0, T] \rightarrow \R^d$ and the aim is to generate a sample path $\hat{X}(w) = \big( \hat{X}_t(w) \big)_{t\in[0,T]}$ which satisfies the SDE together with $w$. An important measure of success for an SDE solver is its strong order of convergence (SOC).
\begin{definition}[Strong order of convergence (SOC)]
\label{def:intro:strong_order}
    Suppose we have a strong SDE solver, which given a Brownian path $w$ produces a solution $\hat{X}_N(w)$ using $N$ computational steps. Let $X(w)$ be a true solution of the SDE corresponding to $w$ (usually obtained via a numerical solver with very small steps). Then the strong error of the solver is
    \[
        \epsilon_{\text{strong}}(N) \coloneqq \sup_{0 \leq n \leq N} \, {\E{w}{ \left| {\hat{X}_N(w)}_{t_n} - {X(w)}_{t_n} \right|^2}}^{\frac{1}{2}}.
    \]
    The solver has a SOC $\gamma$ if there exists a constant $C > 0$ s.t. $ \epsilon_{\text{strong}}(N) \leq C \left( \frac{T}{N} \right) ^{\gamma} \;\; \forall \, N \in \naturals$. When using a constant-step solver $h = \frac{T}{N}$ is just the step size, whereas in adaptive solvers it is the mean step size.
    
\end{definition}
A more in-depth introduction to SDE simulation can be found in \cite{higham2021introduction}.

\begin{figure}[hbp]
    \centering
    \includegraphics[width=0.8\textwidth]{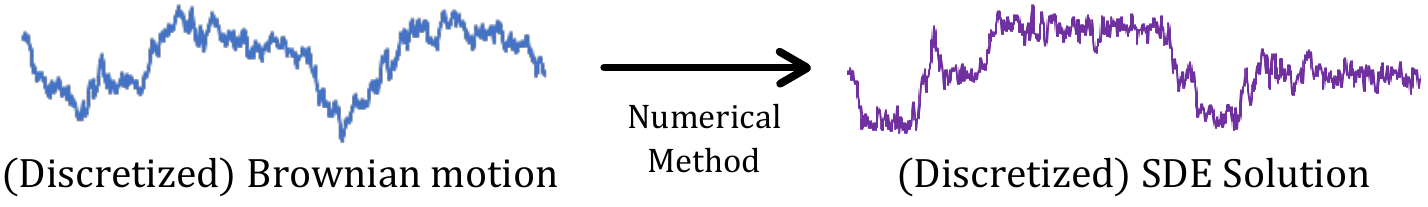}
    \caption{An SDE solver is a deterministic function from a Brownian path to an SDE trajectory \citep{foster2023convergence}.}
    \label{fig:strong_sde_solving}
\end{figure}

Depending on the SDE being simulated, a solver may need access to certain ``Levy areas" in order to achieve a high SOC.

\begin{restatable}[Brownian bridge and Lévy areas]{definition}{LvyBBdef}
\label{def:intro:levy_bb}
Let $0 \leq s \leq r \leq t$ and let $W: [0,\infty) \rightarrow \reals^d$ be a Brownian motion. Write $W_{s,t} = W_t - W_s$. Then the Brownian bridge on $[s,t]$ evaluated at $r$ is given by
\begin{equation*}
    B_r^{s,t} \coloneqq W_r \, | \, \{W_s = W_t = 0\} = W_{s,r} - \frac{r-s}{t-s} W_{s,t} \quad \text{ with convention that when } s=t, \;\; B_s^{s,s} \coloneqq 0. 
\end{equation*}
We define the ``space-space Lévy area" $A_{s,t} \in \reals^{d\times d}$ (usually called just ``Lévy area") as
\[
    A_{s,t}^{(i,j)} \coloneqq \frac{1}{2} \left(\int_{s}^{t} W_{s,r}^{(i)} \, dW_r^{(j)}-\int_{s}^{t} W_{s,r}^{(j)} \, dW_r^{(i)}\right), \quad \text{ for } \; i, j \in \set{1, \ldots, d},
\]
\begin{align*}
    &\text{the ``space-time Lévy area" $H_{s,t} \in \reals^d$ as} \quad && H_{s,t} \coloneqq \frac{1}{t-s} \int_s^t B_r^{s,t} dr, \; \text{ and} \\[1mm]
    &\text{and the ``space-time-time Lévy area" $K_{s,t} \in \reals^d$ as} \quad  && K_{s,t} \coloneqq \frac{1}{(t-s)^2} \int_s^t B_r^{s,t} \left( \frac{t+s}{2} - r \right) dr.
\end{align*}
\end{restatable}

For general multidimensional SDEs, a solver without access to space-space Lévy area $A$ can attain a SOC of at most $\gamma = \frac{1}{2}$ \citep{ClarkCameron}.
Unfortunately, space-space Lévy area is not Gaussian and notoriously difficult to sample from. Approximations to L\'evy area have been well studied (see \cite{Davie,Dickinson,jelincic2023generative,KPW,foster2020a,FH,MR,Wiktorsson}), and the only method of generating space-space Lévy area with SOC higher than $\frac{1}{2}$ is that by \cite{GL94levy}, which only applies when $d=2$.

\paragraph{High order solvers with space-time Lévy area} For certain classes of SDEs, a high SOC can be achieved even with access to only space-time Lévy area $H$, which is Gaussian and hence easier to sample than space-space Lévy area $A$. In particular \cite{rossler2010runge} proposes a solver using space-time Lévy area with a SOC of $1.5$ when applied to additive-noise SDEs, that is when $g$ has no dependence on $X_t$ and can be written as just $g(t)$. \cite{foster2023high} extended this to a more general class of SDEs called commutative noise SDEs, in which the diffusion vector field $g$ commutes in the Lie bracket (i.e.~the columns satisfy $g_i' g_j = g_i g_j'$).

Another class of SDEs that admit high order solvers are the Langevin diffusions, which originate from molecular dynamics \citep{langevin97}, but are now commonly used in Markov Chain Monte Carlo applications \citep{brooks2011mcmc,li2019stochMCMC}. While access to space-time Lévy area enables solvers to achieve SOC 2, it is possible to achieve even SOC 3 if the solver also uses space-time-time Lévy area (which is also Gaussian and easy to generate) \citep{scott2025quicsort}.

\paragraph{Adaptive time-stepping} is a widely used technique in ODE simulation \citep{soderlind2002adaptiveODE,hairer93ODEsBook}. It involves numerically estimating the error of the solver at every step of the solution, and decreasing the step size when errors are too large, or increasing when they are small. However, for SDEs, constant time-stepping is still the predominant paradigm, despite the fact that some problems, such as simulating the CIR model, provably cannot be solved efficiently with constant steps \citep{hefter2019slowCIR}.

The first significant attempt at adaptive stepping for SDE simulation is by \cite{gaines97variable}, where steps of the solver can be halved if large numerical errors are detected. To enable this, the authors proposed an early iteration of the Brownian Tree, which only supported dyadic time-step refinements and required keeping the results of previous queries in memory.

More advanced step-size controllers, such as the PID controller \citep{soderlind2003digital,hairer2002solving-ii,hairer2008solving-i}, can also be used for SDE simulation \citep{ilie2015adaptive}. However, to facilitate that, the BM generation method must also accept queries at non-dyadic times  (i.e.~instead of always
just halving or doubling the step, we permit an arbitrary step size), so methods such as the Brownian Interval by \cite{kidger2021efficient} and Rejection Sampling with Memory (RSwM) by \cite{Rackauckas2017adaptive} were proposed. These are unfortunately query dependent, and require storing previously drawn samples in memory. To address this issue \cite{li2020scalable} proposed the Virtual Brownian Tree, which is query-independent and has a constant memory cost, but is imprecise at non-dyadic points, due to its use of piecewise-constant interpolation. Our new interpolation method proposed in \cref{sec:theory:interp_bm} fixes this and matches the target distribution exactly.

\paragraph{Lévy's construction} The VBT is inspired by Lévy's construction of Brownian motion, which splits the interval $[0,1]$ into progressively smaller dyadic sub-intervals. This means that at level $d$, we have evaluated the BM at all $t$ of the form $k2^{-d}$ with $ \; k \in \{0, 1, ..., 2^d \}.$ As we descend through the levels, this path converges to BM. The way to do this for Brownian increments is well known, and has long been used as a neat proof for the existence of BM. Adding a version of space-time Lévy area to this construction was first discussed by \cite{mauthner1998stepsize,burrage2004adaptive}.
\cite{foster2020a} streamlined this construction and found a way to generate space-time-time Lévy area at dyadic points. Here we extend the latter to non-dyadic points, integrate all this into the VBT algorithm and prove the correctness of its output distribution.

\subsection{The Virtual Brownian Tree}\index{Brownian!Tree}
\label{sec:vbt}

The Virtual Brownian Tree (VBT), proposed by \cite{li2020scalable} and inspired by \cite{gaines97variable}, represents a deterministic function $f_{\text{VBT}}$ from the space of $m$-bit seeds $\set{0, 1}^m$ to the space of continuous paths $\mathcal{C} \! \left([t_0, t_1], \reals^d \right)$. This means that only a single seed needs to be stored to reconstruct the entire path, which reduces the memory cost to $\bigO{1}$. Unlike other approaches (Brownian Interval by \cite{kidger2021efficient} or RSwM by \cite{Rackauckas2017adaptive}), which have to store previous queries, the VBT is stateless. While most methods are concerned with the distribution of each individual sample conditioned on previously generated samples, VBT is concerned with the distribution of the entire path $w \colon [t_0, t_1] \to \reals^d$. Specifically, if $\mathcal{U}$ is a uniform measure in seed space, then we want the push-forward $\mathcal{U} \circ f^{-1}_{\text{VBT}}$ to be close to the Wiener measure on the space of paths (which can be matched exactly on sufficiently spaced cylinder sets using our new interpolation method proposed in \cref{sec:theory:interp_bm}). 

\begin{remark}
\label{remark:pseudo_random_normal}
    Note that a continuous random variable cannot be precisely matched by its floating-point computational approximation, which necessarily introduces some discretisation. Here and throughout the paper we neglect this discrepancy.
\end{remark}

\begin{figure}[htp]
    \centering
    \includegraphics[width=0.5\textwidth]{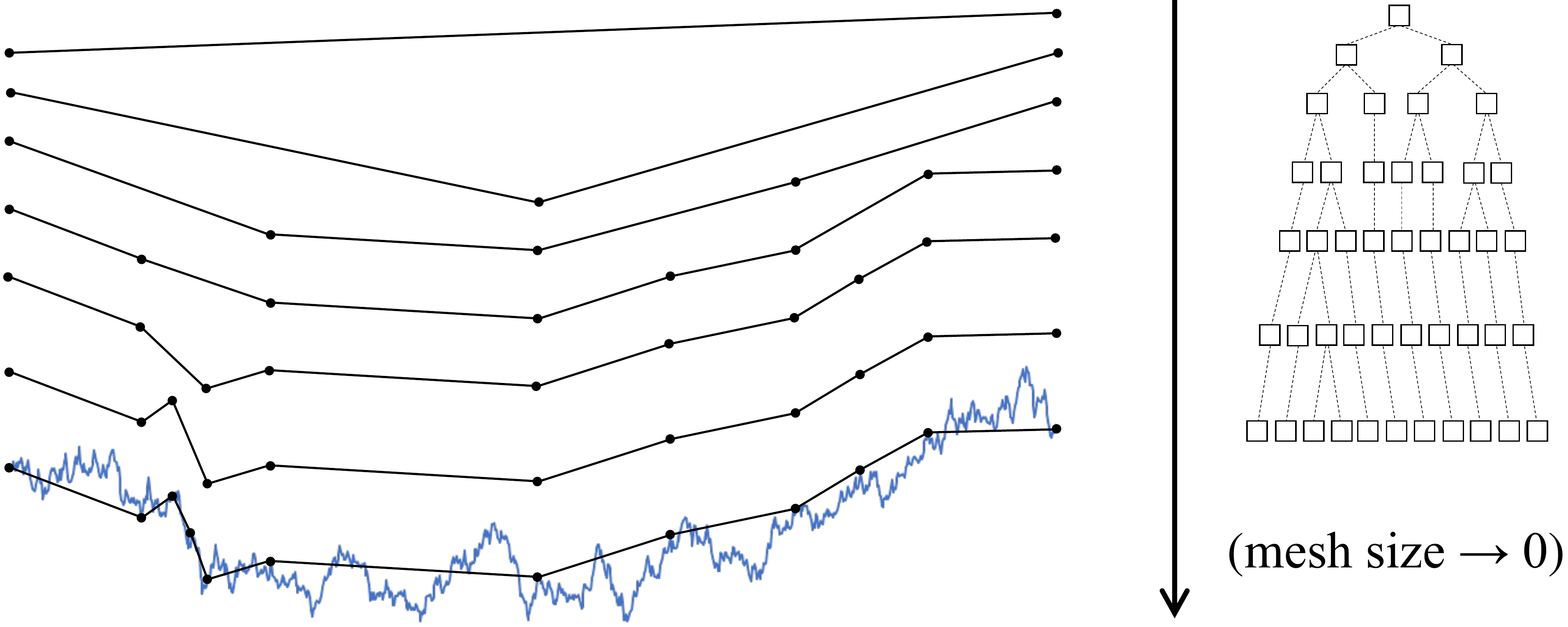}
    \caption{An illustration of the original Brownian Tree by \cite{gaines97variable} (image from \cite{foster2023convergence}).}
    \label{fig:BT_illustration}
\end{figure}

\subsubsection{Splittable PRNGs}\label{sec:vbt:splittable-prng}
The first key ingredient is ``splittable" pseudo-random number generator (PRNG) seeds \citep{prng1, prng2}.

Given an $m$-bit random seed $\rho \in \{0, 1\}^m$, splitting is an operation that produces some $n$ new $m$-bit random seeds $\rho_1, \ldots, \rho_n \in \{0, 1\}^m$, as a deterministic function of $\rho$, for which $\rho, \rho_1, \ldots, \rho_n$ produce statistically independent streams of random numbers when used as the seed for a PRNG.

Given any rooted tree, we can associate a random seed with every node in the tree in the following way.

Let $(V, E, *)$ be a rooted tree, where $V$ is a vertex set,
\begin{equation*}
E \subseteq \set{ \{x, y\} \, : \, x, y \in V, \; x \neq y}
\end{equation*}
a corresponding undirected edge set (connected and without cycles), and $* \in V$ denotes the root. For any $x \in V$, let $\adj(x) = \set{y \in V : \{x, y\} \in E}$ denote the set of vertices adjacent to $x$.

Let $\rho \in \{0, 1\}^m$ be an $m$-bit seed, which we associate with the root $*$. Split $\rho$ into $\rho_1, \ldots, \rho_{|\adj(*)|}$ random seeds and pair each one with a corresponding element $v_i \in \adj(*)$. Recursively split each $\rho_i$ and pair the resulting seeds with the elements of $\adj(v_i)$, etc., repeating this procedure throughout the tree.

By fixing a rooted tree $(V, E, *)$ and a root seed $\rho$, we may deterministically create a PRNG at every node in the tree. Provided we remember only the tree structure and the root seed $s$, we can later rematerialise every PRNG sequence, for every node of the tree, without holding the samples in memory.

\subsubsection{Generating Brownian samples}
\label{sec:vbt:generating_samples}
We generate the value of the Brownian path $w: [t_0, t_1] \rightarrow \reals^d$ at a time $r \in [t_0, t_1]$ using a binary-search-like procedure. We start with $s=t_0,$ $u=t_1$. In each step we set $t = \frac{s+u}{2}$ and use the values of $w_s$ and $w_u$ to compute $w_t$ using the Brownian bridge function (a special case of \cref{eq:vbt:bridge-final-interp}):
\begin{equation}
    \label{eq:vbt:midpoint_bridge}
    w_t = \texttt{bridge}(s, t, u, w_s, w_u, \widehat{\rho} \,) \coloneqq \frac{w_s + w_u}{2} + \tfrac{1}{2}\sqrt{u-s} \, \normalof{0, \eye_d, \widehat{\rho} \,}.
\end{equation}
Here, $\normalof{\cdot, \cdot, \widehat{\rho} \,}$ denotes a Gaussian pseudo-random variable generated using the seed $\widehat{\rho}$, which is the PRNG seed corresponding to the current node in the binary search tree (as per \cref{sec:vbt:splittable-prng}).
To descend into a child node, we halve the interval by setting either $s \gets t$ or $u \gets t$ such that the query time $r$ remains in $[s, u]$. This is repeated until the size of $[s, u]$ is smaller than the desired tolerance $\varepsilon > 0$.

Let $L = \left\lceil - \log_2 (\varepsilon) \right\rceil$ and $V = \set{t_0 + (t_1 - t_0) k 2^{-L} : k \in \set{0, 1, \ldots , 2^L} }$. By recording only the root-level seed $\sigma \in \{0, 1\}^m$, and computing the seeds of child nodes as we descend the tree, the Brownian sample $w_v$ is completely determined for all $v \in V$. The full procedure is given in \cref{alg:vbt:basic-alg}.

\begin{algorithm}[H]
\caption{Sampling from the Virtual Brownian Tree using the $\vbteval$ function. \texttt{split\_seed} denotes splitting a PRNG seed, and \texttt{bridge} denotes the Brownian bridge from \cref{eq:vbt:midpoint_bridge}.}\label{alg:vbt:basic-alg}

\hspace*{2.5mm} \textbf{Input:} $r$ - evaluation time, $[t_0, t_1]$ - definition interval, $\rho$ - PRNG seed, $d$ - dimension, $\varepsilon$ - tolerance.
\vspace{1mm}

\hspace*{2.5mm} \textbf{Result:} Approximation to $w_r$
\vspace{1mm}

\begin{algorithmic}
\State $\rho, \widehat{\rho} \gets \texttt{split\_seed}(\rho, 2)$

\State $s \gets t_0, \;\; u \gets t_1$
\State $w_s \gets 0$
\State $w_u \gets \normalof{0, \, (t_1 - t_0)\eye_{d_w}, \widehat{\rho}\,}$
\vspace{2mm}
\While{$\abs{u-s} > \varepsilon$}
    \State $t \gets (s + u) / 2$
    \State $\rho_1, \rho_2, \widehat{\rho} \gets \texttt{split\_seed}(\rho, 3)$  \Comment{$\widehat{\rho}$ to generate the midpoint, and $\rho_1, \rho_2$ for children nodes.}
    \State $w_t \gets \texttt{bridge}(s, t, u, w_s, w_u, \widehat{\rho}\,)$
    \If{$r > t$}  \Comment{Descend into the \textbf{right} child.}
        \State $s \gets t,$ $\; w_s \gets w_t$
        \State $\rho \gets \rho_1$
    \Else  \Comment{Descend into the \textbf{left} child.}
        \State $u \gets t,$ $\; w_u \gets w_t$
        \State $\rho \gets \rho_2$
    \EndIf
\EndWhile

\State $w_r \gets \finalinterp \! \left(s, r, u, w_s, w_u, \rho \right) $ \Comment{Defined below, originally piecewise constant.}
\State \Return $w_r$
\end{algorithmic}
\end{algorithm}

The main use case will be to sample a Brownian increment $w(r_1) - w(r_0)$ (when an SDE solver steps from $r_0$ to $r_1$). In that case, we separately sample $w(r_0)$ and $w(r_1)$ using \cref{alg:vbt:basic-alg}, and then return $w(r_1) - w(r_0)$. While this takes $\bigO{\log(1/\varepsilon)}$ time, it has the advantage that it requires only $\bigO{1}$ memory.

In the original algorithm, $\finalinterp$ was just piecewise constant, but in \cref{sec:theory:interp_bm} we propose an alternative that guarantees the correct distribution at all times, not only at dyadic ones.

\subsubsection{The VBT and strong SDE simulation}
As long as we use the same seed, the VBT will always generate the same Brownian path, and so all solvers using that Brownian path as input will always follow the same trajectory of the simulated SDE, no matter what step sizes they use. With the usual query-dependent BM generators (where query times can influence the resulting path), this would only be the case if all solvers used exactly the same step sizes.

The usual approach for computing the strong order of an SDE solver is to first generate a very finely discretised Brownian path, store it in memory, and then run the solver using that path. When using GPU acceleration libraries to alleviate the computational cost of strong SDE simulation (e.g.~in the case of diffusion models, \cite{song2021scorebased}) the above workflow needs to be parallelised, and hundreds of finely discretised Brownian paths must be stored in video memory simultaneously. By using the VBT one can ensure that all of the solvers being compared will be fed the same Brownian path, no matter at what times they query it, while needing to store just a single 64-bit seed per path. This is especially relevant when evaluating adaptive solvers, for which we cannot pre-specify at which times they will evaluate the Brownian path.

 Neural SDEs \citep{chen18NODEs,kidger2021efficient,kidger2021neural} pose a similar challenge. To train these, the Brownian path used on the forward pass must be evaluated backwards in time. Normally, this requires storing all samples in memory, incuring a significant cost. In addition, when training via ``optimise-then-discretise" the backwards pass might query the BM at different times than the forward pass, while requiring the new samples to be conditioned on the samples of the forward pass. Both of these difficulties can be avoided by using the VBT.

\section{Exact simulation of Brownian motion and L\'{e}vy area}
\label{sec:theory}

\subsection{New interpolation method for Brownian increments}
\label{sec:theory:interp_bm}
Let $\varepsilon$ be the tolerance parameter, $L = \left\lceil - \log_2 (\varepsilon) \right\rceil$ and $V = \set{t_0 + (t_1 - t_0) k 2^{-L} : k \in \set{0, 1, \ldots , 2^L} }$. Suppose $w: [t_0, t_1] \rightarrow \reals^d$ given by $r \mapsto \vbteval(r, [t_0, t_1], \rho, d, \varepsilon)$ is the path generated by \cref{alg:vbt:basic-alg}. By L\'{e}vy's construction of Brownian motion, we have that if the seed $\rho$ was drawn uniformly at random and $W$ is a Brownian motion, then (with $\eqd$ denoting equality in distribution):
\begin{equation*}
    \left( w_v \right)_{v \in V} \eqd \left( W_v \right)_{v \in V}.
\end{equation*}
However, the original VBT algorithm just snaps the query time $r$ to the nearest element of $V$, so its output distribution is imprecise at all points $r \in [t_0, t_1] \setminus V$. We can fix this by using an interpolation based on the distribution of the Brownian bridge:
\begin{equation}
\label{eq:vbt:bridge-final-interp}
\begin{split}
    &\text{for} \; s<r<u \;\; \left(W_r | W_s, W_u \right) \sim \normalof{W_s + \frac{r-s}{u-s} W_{s,u}, \frac{(r-s)(u-r)}{u-s}}, \;\; \text{so we set} \\[1mm]
    &\finalinterp \! \left(s, r, u, w_s, w_u, \rho \right) = w_s + \frac{r-s}{u-s} \, (w_u - w_s) + \sqrt{\frac{(r-s)(u-r)}{u-s}} \: \normalof{0, \eye_d, \rho}
\end{split}
\end{equation}
The following proposition states that our generated path now exactly matches the desired distribution, as long as we draw at most one sample between each two consecutive points in $V$.

\begin{theorem}[Distributional correctness of VBT]
\label{prop:vbt:match_dist}
    Let $\rho$ be a PRNG seed, let $t_0 < t_1 \in \reals$, $L, d \in \naturals$, $\varepsilon = (t_1 - t_0) 2^{-L}$ and
    \[
    w_r \coloneqq \vbteval \! \left(r, [t_0, t_1], \rho, d, \varepsilon \right),
    \]
    where the \vbt is implemented using the $\finalinterp$ from \cref{eq:vbt:bridge-final-interp}.

    Let $V = \set{t_0 + (t_1 - t_0) k 2^{-L} : k \in \set{0, 1, \ldots , 2^L} }$ be the set of vertices of the VBT and let $S = \set{r_1, \ldots , r_N} \subset [t_0, t_1]$ be a set of query times such that for all $0 \leq n < N$ we have $r_n \leq r_{n+1}$ and $ \left\vert V \cap [r_n, r_{n+1}] \right\vert \geq 1$.

    If $W$ is a Brownian motion and $\rho$ was chosen uniformly at random (but is fixed throughout), then the joint distribution of $w$ sampled at points from $S$ matches the joint distribution of $W$ sampled at the same points:
    \begin{equation*}
        \left( w_{r_1}, \ldots, w_{r_N} \right) \eqd \left( W_{r_1}, \ldots, W_{r_N} \right).
    \end{equation*}
\end{theorem}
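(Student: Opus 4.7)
The plan is to factor the joint distribution into two layers: the values $(w_v)_{v \in V}$ at the dyadic grid, matched to $(W_v)_{v \in V}$ by Lévy's construction, and the values at queries that fall strictly inside leaf intervals, matched by the new Brownian bridge formula combined with independence of the leaf-level PRNG seeds. The spacing hypothesis is what glues the two layers together.

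First, I would observe that, viewed over the full binary tree of depth $L$, the algorithm realises Lévy's construction of Brownian motion on $[t_0, t_1]$: we have $w_{t_0} = 0$, $w_{t_1} \sim N(0, (t_1-t_0)\eye_d)$, and at every subsequent level the new midpoint $w_t$ is set to $(w_s + w_u)/2 + \tfrac{1}{2}\sqrt{u-s}\, Z$ with an independent standard Gaussian $Z$ (cf.\ \cref{eq:vbt:midpoint_bridge}). Since this is precisely the conditional mean and variance of $W_t$ given $W_s, W_u$ when $t = (s+u)/2$, induction on the refinement level yields $(w_v)_{v \in V} \eqd (W_v)_{v \in V}$. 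The independence of the $Z$'s at distinct nodes is exactly the splittable PRNG guarantee of \cref{sec:vbt:splittable-prng}.

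Next, I would use the spacing hypothesis $|V \cap [r_n, r_{n+1}]| \geq 1$ to show that at most one query of $S$ lies in the open interior of any leaf interval $(v, v')$ between adjacent $v, v' \in V$: otherwise two consecutive queries would both sit in the same open leaf, contradicting the existence of a grid vertex between them. Queries that coincide with a grid point cause no issue, since the variance factor $\sqrt{(r-s)(u-r)/(u-s)}$ in \cref{eq:vbt:bridge-final-interp} vanishes when $r \in \{s, u\}$, so $\finalinterp$ returns $w_s$ or $w_u$ exactly. For each interior query $r_n \in (v, v')$, the algorithm produces
\[
w_{r_n} = w_{v} + \frac{r_n - v}{v' - v}\bigl(w_{v'} - w_{v}\bigr) + \sqrt{\frac{(r_n - v)(v' - r_n)}{v' - v}}\, Z_n,
\]
where $Z_n = \normalof{0, \eye_d, \rho_n}$ and $\rho_n$ is the seed attached to the leaf. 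By splittability, the family of leaf seeds $(\rho_n)$ is jointly independent of all internal midpoint seeds used to generate $(w_v)_{v \in V}$, so the $Z_n$'s are iid standard Gaussians independent of the grid values.

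Combining the two steps, conditional on $(w_v)_{v \in V}$ the interior-query values $(w_{r_n})_{r_n \notin V}$ are independent Gaussians with the exact Brownian-bridge mean and variance on their respective leaves. By the Markov property of $W$ and the standard Brownian bridge characterisation, this matches the conditional law of $(W_{r_n})_{r_n \notin V}$ given $(W_v)_{v \in V}$. Stacking this with the first step gives $(w_r)_{r \in S} \eqd (W_r)_{r \in S}$. I expect the main obstacle to be purely notational: tracking which $\widehat{\rho}$'s and which leaf seeds are consumed along each query's descent path and confirming that shared ancestors in the binary tree produce no hidden correlations. The splittable PRNG abstraction makes this bookkeeping routine, because any two distinct nodes of the tree are assigned seeds obtained by independent splits of the root.
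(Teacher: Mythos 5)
Your proof is correct and follows essentially the same strategy as the paper's underlying argument (which actually appears in the proof of the more general Theorem~\ref{prop:impl:match_dist}): condition on the dyadic-grid values $(w_v)_{v\in V}$, use the splittable-PRNG structure to get independence and the correct marginal Brownian-bridge law in each leaf, and invoke the Markov property of $W$ to match the target conditional law. The only cosmetic differences are that the paper wraps the same chain of conditioning in the monotone class theorem plus an explicit tower-law calculation rather than a direct Gaussian argument, and you spell out the boundary case where a query coincides with a grid point.
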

\begin{proof}
    This is a special case of \cref{prop:impl:match_dist}, in which we extend this result to include L\'{e}vy areas as well.
\end{proof}

\subsection{Chen's relation for L\'{e}vy area}
\label{sec:theory:chen}

Recall the space-time L\'{e}vy area $H$ and the space-time-time L\'{e}vy area $K$ from \cref{def:intro:levy_bb}.

In order for the \vbt to generate these L\'{e}vy areas, we must devise versions of the $\texttt{bridge}$ and $\finalinterp$ functions that work not only for the Brownian motion $W$, but for the pair $(W, H)$ or the triple $(W, H, K)$. The first step is finding a way of concatenating L\'{e}vy areas over consecutive intervals.

For times $0 \leq s \leq t \leq u$, Brownian motion can be concatenated via addition $W_{s,u} = W_{s,t} + W_{t,u}$, while concatenating L\'{e}vy areas requires a special case of Chen's relation \citep{Chen1957IntegrationOP}, which uses rescaled L\'{e}vy areas.

\begin{definition}[Rescaled L\'{e}vy areas]
\label{def:theory:rescaled_levy}
Let $0 \leq s < t$ be two times. Then the rescaled space-time L\'{e}vy area $\bar{H}$ and space-time-time L\'{e}vy area $\bar{K}$ are given by
\begin{equation*}
    \bar{H}_{s,t} = (t-s) H_{s,t}, \qquad  \bar{K}_{s,t} = (t-s)^2 K_{s,t}.
\end{equation*}
\end{definition}

\begin{restatable}[Chen's relation]{theorem}{chenrel}
    \label{thm:theory:Chen_rel}
    For times $0 \leq s < t < u$, Chen's relation is given by
    \begin{align}
    \label{eq:theory:Chen_rel}
    \begin{split}
        W_{s,u} &= W_{s,t} + W_{t,u}, \\
        \bar{H}_{s,u} &= \bar{H}_{s,t} + \bar{H}_{t,u} + \frac{u-s}{2} B_t^{s,u}, \\
        \bar{K}_{s,u} &= \bar{K}_{s,t} + \bar{K}_{t,u} + \frac{u-t}{2} \bar{H}_{s,t} - \frac{t-s}{2} \bar{H}_{t,u} + \frac{(u-t)^2 - (t-s)^2}{12} B_t^{s,u}.
    \end{split}
    \end{align}
    where $B_t^{s,u} \coloneqq W_{s,t} - \frac{t-s}{u-s} W_{s,u}$ denotes the Brownian bridge associated with $W$ on $[s,u]$.
\end{restatable}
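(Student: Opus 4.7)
The first identity $W_{s,u} = W_{s,t} + W_{t,u}$ is immediate from the notation $W_{a,b} \coloneqq W_b - W_a$. For the other two, my plan is to first rewrite both quantities via the pathwise representations
\[
\bar{H}_{s,t} = \int_s^t W_r \, dr - \tfrac{t-s}{2}(W_s + W_t), \qquad \bar{K}_{s,t} = \int_s^t W_r \bigl(\tfrac{s+t}{2} - r\bigr) \, dr + \tfrac{(t-s)^2}{12} \, W_{s,t},
\]
which both follow by substituting $B_r^{s,t} = W_r - W_s - \tfrac{r-s}{t-s} W_{s,t}$ into \cref{def:intro:levy_bb} and evaluating the two elementary polynomial integrals $\int_s^t(\tfrac{s+t}{2}-r)\,dr = 0$ and $\int_s^t (r-s)(\tfrac{s+t}{2}-r)\,dr = -\tfrac{(t-s)^3}{12}$. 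After this reduction the two remaining identities are pathwise algebraic statements about $W$, so it suffices to expand everything in the basis $\{W_s, W_t, W_u\}$.

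For the $\bar{H}$ identity, the running integrals concatenate perfectly, $\int_s^t W_r\,dr + \int_t^u W_r\,dr = \int_s^u W_r\,dr$. Hence $\bar{H}_{s,u} - \bar{H}_{s,t} - \bar{H}_{t,u}$ collapses to the boundary combination $\tfrac{t-u}{2}W_s + \tfrac{u-s}{2}W_t - \tfrac{t-s}{2}W_u$, and expanding $\tfrac{u-s}{2}B_t^{s,u} = \tfrac{u-s}{2}(W_t - W_s) - \tfrac{t-s}{2}(W_u - W_s)$ produces exactly the same three coefficients.

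For $\bar{K}$ the only subtlety is that the kernels appearing in $\bar{K}_{s,u}$, $\bar{K}_{s,t}$ and $\bar{K}_{t,u}$ are centred at three different midpoints. I would split $\int_s^u W_r(\tfrac{s+u}{2}-r)\,dr$ as $\int_s^t + \int_t^u$ and rewrite the kernel on each subinterval via $\tfrac{s+u}{2} - r = (\tfrac{s+t}{2} - r) + \tfrac{u-t}{2}$ on $[s,t]$ and $\tfrac{s+u}{2} - r = (\tfrac{t+u}{2} - r) - \tfrac{t-s}{2}$ on $[t,u]$. The ``shifted'' parts reproduce the integrals inside $\bar{K}_{s,t}$ and $\bar{K}_{t,u}$; the constant remainders yield $\tfrac{u-t}{2}\int_s^t W_r\,dr - \tfrac{t-s}{2}\int_t^u W_r\,dr$, which via the $\bar{H}$ representation equals $\tfrac{u-t}{2}\bar{H}_{s,t} - \tfrac{t-s}{2}\bar{H}_{t,u}$ plus linear-in-endpoints boundary terms. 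Combining these with the $\tfrac{(\cdot)^2}{12}\,W_{\cdot,\cdot}$ correction from the $\bar{K}$ formula and applying $W_{s,u} = W_{s,t} + W_{t,u}$ should collapse the entire residual into a scalar multiple of $B_t^{s,u}$ with coefficient $\tfrac{(u-t)^2 - (t-s)^2}{12}$.

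The main obstacle will be the algebraic bookkeeping in this last step: several quadratic-in-length coefficients mix across three values of $W$ and one must confirm that the residual really becomes a multiple of $B_t^{s,u}$ rather than a generic combination of $\{W_s, W_t, W_u\}$. I would match coefficients of $W_{s,t}$ and $W_{t,u}$ directly on both sides; writing $a = t-s$ and $b = u-t$, the residual coefficient of $W_{s,t}$ should simplify to $\tfrac{b(b-a)}{12}$ and that of $W_{t,u}$ to $\tfrac{a(a-b)}{12}$, which after factoring out $\tfrac{b-a}{12}$ recovers precisely $\tfrac{b^2-a^2}{12}\bigl(W_{s,t} - \tfrac{a}{a+b}W_{s,u}\bigr) = \tfrac{(u-t)^2 - (t-s)^2}{12}\, B_t^{s,u}$, completing the proof.
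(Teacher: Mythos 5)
Your proof is correct, and you took a genuinely different route from the paper. You reduce $\bar H_{s,t}$ and $\bar K_{s,t}$ to explicit pathwise representations in terms of the raw Wiener path,
\begin{equation*}
\bar{H}_{s,t} = \int_s^t W_r \, dr - \tfrac{t-s}{2}(W_s + W_t), \qquad \bar{K}_{s,t} = \int_s^t W_r \bigl(\tfrac{s+t}{2} - r\bigr) \, dr + \tfrac{(t-s)^2}{12} \, W_{s,t},
\end{equation*}
then split the defining integral of $\bar K_{s,u}$ at $t$, re-centre the linear kernel on each subinterval, and match coefficients of $W_{s,t}$ and $W_{t,u}$ at the end. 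The paper instead stays entirely in bridge language: it substitutes the bridge-of-bridge identities $B_r^{s,t} = B_r^{s,u} - \frac{r-s}{t-s}B_t^{s,u}$ (for $r\in[s,t]$) and $B_r^{t,u} = B_r^{s,u} - \frac{u-r}{u-t}B_t^{s,u}$ (for $r\in[t,u]$) directly into the definitions of $\bar K_{s,t}$ and $\bar K_{t,u}$ and integrates. Your reduction to $\{W_s, W_t, W_u\}$ is more elementary and makes the final coefficient-matching completely explicit; I verified that $R = \tfrac{b(b-a)}{12}W_{s,t} + \tfrac{a(a-b)}{12}W_{t,u} = \tfrac{b^2-a^2}{12}B_t^{s,u}$ with $a=t-s$, $b=u-t$ exactly as you predicted. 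The paper's bridge-based manipulation avoids the appearance (and cancellation) of the endpoint boundary terms $W_s, W_t, W_u$ in the intermediate steps and is perhaps more natural given the bridge definitions, but requires first establishing the bridge decomposition identities. Either is a clean, self-contained argument once the pathwise (or bridge) representations are in place.
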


The proof of this theorem and a visualisation of Chen's relation for space-time L\'{e}vy area are in \cref{sec:proofs:chen}.


\subsubsection{L\'{e}vy area as a singly-indexed proccess}
\label{sec:levy_singly_indexed}

Note that unlike the Brownian motion, which has a value $W_t$ at any time $t$, the L\'{e}vy areas are only defined for time intervals, i.e.~we give a meaning to $H_{s,t}$, but not to $H_t$. Dealing with such a doubly indexed process in numerical applications can be difficult. In particular, when computing $H_{s,t}$, rather than just running \cref{alg:vbt:basic-alg} for $s$ and $t$ separately and then taking the difference, an entirely different approach would have to be devised. Fortunately, we can use Chen's relation to tackle this.

By setting $s=0$ in \cref{eq:theory:Chen_rel}, we obtain
\begin{equation}
\label{eq:theory:chen_from_zero}
\begin{split}
    \bar{H}_{t,u} &= \bar{H}_{0,u} - \bar{H}_{0,t} - \tfrac{1}{2} \! \left( u W_t - t W_u \right), \\[2mm]
    \bar{K}_{t,u} &= \bar{K}_{0,u} - \bar{K}_{0,t} - \frac{u-t}{2} \bar{H}_{0,t} + \frac{t}{2} \bar{H}_{t,u} - \frac{u - 2t}{12} \left( u W_t - t W_u \right).
\end{split}
\end{equation}

Let $Y_{s,r} \coloneqq \big(W_{s,r}, \, H_{s,r}, \, K_{s,r} \big)$ for any $0 \leq s \leq r$. Then, by above, knowing any two of $Y_{0,t}, Y_{0,u}, Y_{t,u}$ lets one compute the third. The same holds when $Y$ is instead just $(W, H)$, or when we use the rescaled version $\bar{Y}_{s,r} = \big( W_{s,r}, \, \bar{H}_{s,r}, \, \bar{K}_{s,r} \big)$. From here on, we will use $Y_{0,r}$ and $Y_r$ interchangeably (likewise for $H$ and $K$).

\subsection{Midpoint Brownian bridge for L\'{e}vy area}
\label{sec:theory:midpoint}

Let $0 \leq s < u$ and $t = \frac{s+u}{2}$. To incorporate L\'{e}vy areas into the \texttt{bridge} function we need to find a way to generate the midpoint values $\bar{Y}_t$ given $\bar{Y}_s$ and $\bar{Y}_u$.
This can be done using theorems 6.1.6 and 6.1.8 from \citep{foster2020a}. This midpoint bridge formula will differ depending on if we are working with just the process $Y = (W, H)$, or with the full $Y = (W, H, K)$. These formulae will be stated separately.

\subsubsection{Space-time L\'{e}vy area only}
\label{sec:theory:midpoint:wh}

Given $W_s, W_u, \bar{H}_s, \bar{H}_u$, we want to generate $W_t$ and $\bar{H}_t$. We first compute
$W_{s,u}, \bar{H}_{s,u}$ using \cref{eq:theory:chen_from_zero}.
The following corollary of \cref{thm:theory:arbitrary_wh} \citep{foster2020a} gives the distribution at the midpoint:
\begin{corollary}[Midpoint Brownian bridge for $\left(W, H\right)$]
\label{cor:theory:midpoint_wh}
    Let $0 \leq s < u$ and $t = \frac{s+u}{2}$. Then
    \begin{align*}
        W_{s,t} &= \frac{1}{2} W_{s,u} + \frac{3}{2} H_{s,u} + Z, &&  \quad
        W_{t,u} = \frac{1}{2} W_{s,u} - \frac{3}{2} H_{s,u} - Z, \\
        H_{s,t} &= \frac{1}{4} H_{s,u} - \frac{1}{2} Z + \frac{1}{2} N, &&  \quad
        H_{t,u} = \frac{1}{4} H_{s,u} - \frac{1}{2} Z - \frac{1}{2} N,
    \end{align*}
    where $Z \sim \normalof{0, \frac{1}{16} (u-s)}$, $N \sim \normalof{0, \frac{1}{12} (u-s)}$, $W_{s,u}$ and $\barH_{s,u}$ are all independent.
\end{corollary}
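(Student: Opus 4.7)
The plan is to obtain this as a specialisation of Theorem \ref{thm:theory:arbitrary_wh}, which (following \cite{foster2020a}) gives the conditional law of $(W_{s,r}, H_{s,r})$ given $(W_{s,u}, \bar{H}_{s,u})$ for every $r \in [s, u]$. Setting $r = (s+u)/2$ and evaluating the ratios $(r-s)/(u-s) = 1/2$ and $(u-r)(r-s)/(u-s)^2 = 1/4$ should collapse the coefficients of the general formula to the constants $\tfrac12, \tfrac32, \tfrac14, -\tfrac12$ appearing in the statement, while the two conditional variances collapse to $\tfrac{1}{16}(u-s)$ and $\tfrac{1}{12}(u-s)$.

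If one instead proves the corollary from scratch, the argument proceeds by reducing to $s=0$, $u=1$ via translation and Brownian scaling, and exploiting that $(W_{0,1/2}, W_{1/2,1}, H_{0,1/2}, H_{1/2,1}, W_{0,1}, H_{0,1})$ is centred and jointly Gaussian. The conditional distribution of the first four coordinates given the last two is then Gaussian, so it suffices to check that the displayed formulas --- with $Z$ and $N$ independent Gaussians of the stated variances, independent of $(W_{0,1}, H_{0,1})$ --- reproduce this conditional law in mean and covariance. The conditional means are $L^2$-projections onto the span of $\{W_{0,1}, H_{0,1}\}$, which are computed from the marginals $W_{0,1} \sim \mathcal{N}(0,1)$, $H_{0,1} \sim \mathcal{N}(0, 1/12)$ together with the fact that $W_{0,1}$ and $H_{0,1}$ are independent (since one is the endpoint of the Brownian bridge and the other its integrated constant-mode, which are $L^2$-orthogonal functionals of $W$). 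The residual covariances on the two half-intervals follow by applying the same identities on $[0,1/2]$ and $[1/2,1]$ via scaling.

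A good internal consistency check, which I would use to pin down the signs and verify the result end-to-end, is Chen's relation (Theorem \ref{thm:theory:Chen_rel}): with $s=0$, $u=1$, $t=1/2$, this reads $H_{0,1} = \tfrac12(H_{0,1/2} + H_{1/2,1}) + \tfrac12 B_{1/2}^{0,1}$. Adding the two $H$-formulas gives $H_{0,1/2} + H_{1/2,1} = \tfrac12 H_{0,1} - Z$, while the $W$-formula yields $B_{1/2}^{0,1} = W_{0,1/2} - \tfrac12 W_{0,1} = \tfrac32 H_{0,1} + Z$, and plugging both into the right-hand side recovers $H_{0,1}$ exactly. The main obstacle is bookkeeping rather than conceptual: keeping the signs of $Z$ and $N$ consistent across the two subintervals, and in particular ensuring that $Z$ appears with the same sign in both $W_{s,t}$ and $H_{s,t}$ so that the correct $W$-$H$ correlation on each half-interval is reproduced.
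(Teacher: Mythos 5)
Your primary route --- specialising Theorem~\ref{thm:theory:arbitrary_wh} at $r = t = \tfrac{s+u}{2}$ and simplifying the coefficients $a$, $b$, $c$, $d$ --- is exactly the paper's proof, and your arithmetic checks out (with $r-s = u-r = \tfrac{u-s}{2}$ one gets $a = b = \tfrac{(u-s)^{3/2}}{16}$, $c = \tfrac{\sqrt{3}(u-s)^{3/2}}{24}$, giving $Z = \tfrac{(u-s)^{1/2}}{4}X_1$ and $N = \tfrac{(u-s)^{1/2}}{\sqrt{12}}X_2$ with the stated variances). The from-scratch Gaussian conditioning sketch and the Chen's-relation consistency check are additional but not needed; the specialisation alone is the paper's argument.
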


\begin{proof}
    Follows directly from \cref{thm:theory:arbitrary_wh} by setting $r = t = \frac{s+u}{2}$.
\end{proof}

Since we are working with the rescaled L\'{e}vy area, we can write the above as
\begin{equation}
\label{eq:theory:midpoint_wh_rescaled}
    W_{s,t} = \frac{1}{2} W_{s,u} + \frac{3}{2 (u-s)} \bar{H}_{s,u} + Z, \qquad
    \bar{H}_{s,t} = 
    \frac{1}{8} \bar{H}_{s,u} - \frac{u-s}{4} Z + \frac{u-s}{4} N.
\end{equation}

Finally using Chen's relation again we can compute $W_t, \bar{H}_t$ from $W_s, \barH_s, W_{s,t}$ and $\bar{H}_{s,t}$.

\subsubsection{Both space-time and space-time-time L\'{e}vy areas}
\label{sec:theory:midpoint:whk}

Suppose instead, we are given $(W, \bar{H},  \bar{K})_s$, $(W, \bar{H},  \bar{K})_u$ and we want to generate
$(W, \bar{H},  \bar{K})_t$. We first compute $W_{s,u}, \bar{H}_{s,u}, \bar{K}_{s,u}$ using \cref{eq:theory:chen_from_zero}.
The midpoint Brownian bridge is given by Theorem 6.1.8 from \cite{foster2020a}:

\begin{restatable}[Midpoint Brownian bridge for $(W, H,  K )$]{theorem}{midpointWHK}\label{thm:theory:midpoint_whk}
Let $t$ denote the midpoint $t=s+\frac{1}{2}(u-s)$. Then,
\begin{align*}
W_{s,t} & = \frac{1}{2}W_{s,u} + \frac{3}{2}H_{s,u} + Z, &&  \quad
W_{t,u} = \frac{1}{2}W_{s,u} - \frac{3}{2}H_{s,u} - Z,\\[2pt]
H_{s,t} & = \frac{1}{4}H_{s,u} + \frac{15}{4}K_{s,u} - \frac{1}{2}Z + X_{1}, &&  \quad
H_{t,u} = \frac{1}{4}H_{s,u} - \frac{15}{4}K_{s,u} - \frac{1}{2}Z - X_{1},\\[2pt]
K_{s,t} & = \frac{1}{8}K_{s,u} - \frac{1}{2}X_{1} + X_{2}, && \quad
K_{t,u} = \frac{1}{8}K_{s,u} - \frac{1}{2}X_{1} - X_{2},
\end{align*}
where $W_{s,u}, H_{s,u}, K_{s,u}, Z, X_{1} , X_{2}$ are independent Gaussian random variables with
\begin{equation*}
Z \sim \normalof{0, \frac{u-s}{16}}, \quad X_{1} \sim \normalof{0, \frac{u-s}{768}}, \quad X_{2} \sim \normalof{0, \frac{u-s}{2880}}.
\end{equation*}
\end{restatable}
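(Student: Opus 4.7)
The plan is to derive the formulas by computing the conditional Gaussian law of $(W_{s,t}, H_{s,t}, K_{s,t})$ given the endpoint triple $(W_{s,u}, H_{s,u}, K_{s,u})$, and then invoking Chen's relation (\cref{thm:theory:Chen_rel}) to produce the companion expressions on $[t, u]$. The key structural observation is that $H$ and $K$ are bounded linear functionals of the Gaussian process $W$, so component-wise the six-dimensional vector $(W_{s,t}, H_{s,t}, K_{s,t}, W_{s,u}, H_{s,u}, K_{s,u})$ is jointly centered Gaussian. Its conditional law given the last three coordinates is therefore Gaussian, with mean linear in the conditioning variables and a deterministic residual covariance; the content of the theorem is identifying the coefficients (the $\tfrac{1}{2}, \tfrac{3}{2}, \tfrac{1}{4}, \tfrac{15}{4}, \tfrac{1}{8}$) and checking that the residuals, in the reparametrisation $(Z, X_1, X_2)$, are independent with the stated variances.

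Concretely, I would first rescale to $s = 0$, $u = 1$, $t = \tfrac{1}{2}$ (by the self-similarity of Brownian motion this is without loss of generality), and then compute the $6 \times 6$ covariance matrix using the defining integral formulas for $H$ and $K$ together with $\mathbb{E}[W_p W_q] = p \wedge q$ and Fubini. The diagonal entries $\mathrm{Var}(H_{s,v}) = (v-s)/12$ and $\mathrm{Var}(K_{s,v}) = (v-s)/720$ are standard, and the remaining cross-covariances (e.g.\ $\mathbb{E}[W_{s,t} H_{s,u}]$, $\mathbb{E}[H_{s,t} K_{s,u}]$) reduce to elementary polynomial integrals on $[0,1]$ and $[0, \tfrac{1}{2}]$.

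Standard Gaussian conditioning then recovers the conditional mean as the displayed linear combination of $W_{s,u}, H_{s,u}, K_{s,u}$, matching the asserted coefficients. Defining $Z, X_1, X_2$ as the residuals in the symmetric form of the theorem and computing the resulting $3 \times 3$ residual covariance matrix should yield a diagonal matrix with entries $(u-s)/16$, $(u-s)/768$, $(u-s)/2880$; diagonality gives the mutual independence of $Z, X_1, X_2$ for free. Conceptually this is not accidental: conditioning on $(W_{s,u}, H_{s,u}, K_{s,u})$ precisely removes the component of the midpoint-bridge information lying in the span of the first three shifted Legendre polynomials on $[s, u]$, and the three residual modes are orthogonal in $L^2$. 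This viewpoint also explains why there are exactly three independent residual degrees of freedom.

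Finally, the $[t, u]$ expressions follow immediately from Chen's relation: $W_{t,u} = W_{s,u} - W_{s,t}$ gives the first pair, and substituting the $[s, t]$ formulas into the rescaled $\bar{H}$ and $\bar{K}$ identities of \cref{eq:theory:Chen_rel} (then dividing by the appropriate powers of $u - s$) yields the remaining two pairs with their symmetric sign structure. The main obstacle is algebraic bookkeeping: the brute-force Schur-complement approach is elementary but error-prone, so organising the calculation around the orthogonal polynomial decomposition above is what turns the result from a miraculous cancellation into a transparent one.
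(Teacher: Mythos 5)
Your plan is essentially the same as the paper's proof: condition the Gaussian triple at the midpoint on the endpoint triple, read off the conditional mean, factor the residual covariance, and finish with Chen's relation. The paper reaches the same place by first proving the general-$r$ case (\cref{thm:theory:arbitrary_whk}) — which it needs anyway for $\finalinterp$ — and specialising $r = t$, then taking the Cholesky decomposition of the resulting $3\times 3$ matrix $\vSigma_t$; you propose to instead compute the $6 \times 6$ joint covariance directly at the midpoint using Fubini and then Schur-complement. Both are valid; the paper's version is more economical because the arbitrary-$r$ formula must be established regardless.

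One assertion needs tightening. The conditional covariance matrix $\vSigma_t$ of the raw residuals $(W_{s,t} - \tfrac{1}{2}W_{s,u} - \tfrac{3}{2}H_{s,u},\, H_{s,t} - \tfrac{1}{4}H_{s,u} - \tfrac{15}{4}K_{s,u},\, K_{s,t} - \tfrac{1}{8}K_{s,u})$ is \emph{not} diagonal — it has off-diagonal entries $-\tfrac{u-s}{32}$ and $-\tfrac{u-s}{1536}$. The variables $Z, X_1, X_2$ in the statement are obtained from these residuals by a lower-triangular whitening (equivalently, they are the image of the residuals under the inverse of the Cholesky factor $L$ of $\vSigma_t$, rescaled row-wise). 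So "diagonality for free" applies to $\mathrm{Cov}(Z, X_1, X_2)$, which is diagonal by construction once you've found $L$; it does \emph{not} apply to the raw residuals. Discovering the triangular structure — i.e., finding $L$ — is a genuine step, not automatic from Gaussian conditioning. Your Legendre-mode intuition does not resolve this either: the orthogonality you describe is what the Cholesky step produces, not what the conditioning itself gives you. With that correction your outline is sound, and the Chen's-relation step for the $[t,u]$ companions is exactly as the paper does it.
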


The proof can be found in \cref{pf:midpoint_whk}.

Expressing this in terms of rescaled L\'{e}vy areas, the formula for $W$ is the same as in \cref{eq:theory:midpoint_wh_rescaled}, and
\begin{equation*}
    \bar{H}_{s,t} = \frac{1}{8} \bar{H}_{s,u} + \frac{15}{8 (u-s)} \bar{K}_{s,u} - \frac{u-s}{4} Z + \frac{u-s}{2} X_1, \quad
    \bar{K}_{s,t} = \frac{1}{32} \bar{K}_{s,u} - \frac{(u-s)^2}{8} X_1 + \frac{(u-s)^2}{4} X_2.
\end{equation*}

Finally, we can use Chen's relation again to compute $W_t, \bar{H}_t, \bar{K}_t$.

\subsection{Interpolation between dyadic points for L\'{e}vy area}
\label{sec:theory:general_times}

To derive the L\'{e}vy-area-endowed version of the $\finalinterp$ function, we need to be able to generate the Brownian motion and L\'{e}vy areas at any intermediate time $r \in [s,u]$.
The $(W, H)$ case is taken from \citep[Theorem 6.1.4]{foster2020a}, whereas the $(W, H, K)$ case is new here.

\subsubsection{Space-time L\'{e}vy area}
\label{sec:theory:general_times:wh}
Given $W_s, W_u, \bar{H}_s, \bar{H}_u$, we want to generate
$W_r $ and $ \bar{H}_r$. We first compute $W_{s,u} $ and $ \bar{H}_{s,u}$ using Chen's relation as in \cref{sec:theory:midpoint:wh}, and rescale to obtain $H_{s,u} = \frac{1}{u-s} \bar{H}_{s,u}$.

Theorem 6.1.4 from \cite{foster2020a} gives the required update rule.

\begin{restatable}[General Brownian bridge for $(W, H)$]{theorem}{arbitraryWH}
Let $r \in [s,u]$ be some time and let $W$ denote a standard Brownian motion. Then\label{thm:theory:arbitrary_wh}
\begin{align*}
    W_{s,r} & = \frac{r-s}{u-s}W_{s,u} + \frac{6(r-s)(u-r)}{(u-s)^{2}}H_{s,u} + \frac{2(a+b)}{u-s}X_{1},\\[3pt]
    H_{s,r} & = \frac{(r-s)^{2}}{(u-s)^2}H_{s,u} - \frac{a}{r-s}X_{1} + \frac{c}{r-s}X_{2},
\end{align*}
where $W_{s,u}, H_{s,u}, X_{1}, X_{2}$ are independent Gaussian random variables with $X_{1}, X_{2}  \sim \mathcal{N}\left(0,1\right)$ and
\begin{align*}
    a  & \coloneqq \frac{\left(r-s\right)^{\frac{7}{2}}\left(u-r\right)^{\frac{1}{2}}}{2(u-s) \, d}, &&
    b \coloneqq \frac{\left(r-s\right)^{\frac{1}{2}}\left(u-r\right)^{\frac{7}{2}}}{2(u-s) \, d},\\[3pt]
    c  & \coloneqq \frac{\sqrt{3}\left(r-s\right)^{\frac{3}{2}}\left(u-r\right)^{\frac{3}{2}}}{6 \, d}, &&
    d \coloneqq \sqrt{\left(r-s\right)^{3}+\left(u-r\right)^{3}}.
\end{align*}
\end{restatable}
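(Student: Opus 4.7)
My plan is to exploit joint Gaussianity. The vector $(W_{s,u}, H_{s,u}, W_{s,r}, H_{s,r})$ consists of centred linear functionals of Brownian motion and is therefore centred jointly Gaussian; the right-hand side of the statement is an affine function of $(W_{s,u}, H_{s,u})$ plus independent standard Gaussian noise $X_1, X_2$, hence also centred Gaussian in the same linear span. Matching laws thus reduces to matching the $4 \times 4$ covariance matrix, organised most cleanly via Gaussian conditioning on $(W_{s,u}, H_{s,u})$.

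The first step is to compute the cross-covariances of $(W_{s,r}, H_{s,r})$ with $(W_{s,u}, H_{s,u})$ directly from the definitions. Using the singly-indexed representation $H_{s,t} = \frac{1}{t-s}\int_s^t W_{s,v}\,dv - \frac{1}{2} W_{s,t}$, the covariance $\mathrm{Cov}(W_{s,a}, W_{s,b}) = (a \wedge b) - s$, and Fubini, one obtains after short integrations
\begin{equation*}
\mathrm{Cov}(W_{s,r}, H_{s,u}) = \frac{(r-s)(u-r)}{2(u-s)}, \qquad \mathrm{Cov}(H_{s,r}, W_{s,u}) = 0, \qquad \mathrm{Cov}(H_{s,r}, H_{s,u}) = \frac{(r-s)^2}{12(u-s)}.
\end{equation*}
Combined with the standard facts $\mathrm{Var}(W_{s,t}) = t-s$, $\mathrm{Var}(H_{s,t}) = (t-s)/12$, and $W_{s,t} \perp H_{s,t}$, the Gaussian regression formula $\mathbb{E}[X \mid Y] = \Sigma_{XY}\Sigma_{YY}^{-1} Y$ (applied with $\Sigma_{YY} = \mathrm{diag}(u-s,(u-s)/12)$) exactly reproduces the deterministic $W_{s,u}$- and $H_{s,u}$-coefficients in the stated formulas; the vanishing cross-covariance above is what kills the $W_{s,u}$-coefficient of $H_{s,r}$.

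What remains is to verify that the independent noise terms contribute precisely the correct $2 \times 2$ residual covariance matrix $\Sigma_{\mathrm{res}}$. Writing $p = r-s$, $q = u-r$ and using $p^3+q^3 = (p+q)(p^2-pq+q^2)$, the three entries of $\Sigma_{\mathrm{res}}$ simplify to
\begin{equation*}
\Sigma_{\mathrm{res}}^{WW} = \frac{pq(p^3+q^3)}{(p+q)^4}, \qquad \Sigma_{\mathrm{res}}^{WH} = -\frac{p^3 q}{2(p+q)^3}, \qquad \Sigma_{\mathrm{res}}^{HH} = \frac{pq(3p^2+3pq+q^2)}{12(p+q)^3},
\end{equation*}
while the RHS of the statement produces noise covariance entries $\frac{4(a+b)^2}{(p+q)^2}$, $-\frac{2a(a+b)}{p(p+q)}$, and $\frac{a^2+c^2}{p^2}$ respectively. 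Substituting the definitions of $a,b,c$ and $d = \sqrt{p^3+q^3}$ matches each pair.

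The main obstacle is this final algebraic verification, in particular the $HH$-identity needed to match $(a^2+c^2)/p^2$ with $\Sigma_{\mathrm{res}}^{HH}$, which reduces to the polynomial identity $(3p^2+3pq+q^2)(p^2-pq+q^2) = 3p^4 + p^2q^2 + 2pq^3 + q^4$. The $\sqrt{3}$ factor in $c$ and the normalisation $d = \sqrt{p^3+q^3}$ are dictated precisely by the requirement that $(X_1, X_2)$ yield an orthogonal decomposition of $\Sigma_{\mathrm{res}}$, with $X_1$ carrying the residual correlation between $W_{s,r}$ and $H_{s,r}$ and $X_2$ being orthogonal to it.
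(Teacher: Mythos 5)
Your proof is correct, and I've checked the covariance computations: with $p=r-s$, $q=u-r$, the cross-covariances $\mathrm{Cov}(W_{s,r},H_{s,u})=pq/(2(p+q))$, $\mathrm{Cov}(H_{s,r},W_{s,u})=0$, $\mathrm{Cov}(H_{s,r},H_{s,u})=p^2/(12(p+q))$ are right, the regression step reproduces the stated deterministic coefficients, and the residual-covariance matching (including the polynomial identity behind the $HH$ entry) all checks out. Note, though, that the paper does not actually prove this theorem --- it imports it from Foster's thesis (Theorem 6.1.4 of that reference). The most relevant comparison is therefore with the paper's proof of the companion $(W,H,K)$ result in Appendix B.3, which uses the same strategy --- decompose into $\widetilde{Y}_{s,r}=\mathbb{E}[Y_{s,r}\mid Y_{s,u}]$ plus an independent Gaussian residual, then match the residual covariance. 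The one stylistic difference is in how the conditional mean is obtained: the paper invokes the known Brownian cubic polynomial from the optimal-polynomial-approximation literature and then propagates via Brownian bridge identities, whereas you derive the regression coefficients from first-principles covariance integrals via $\Sigma_{XY}\Sigma_{YY}^{-1}$. Your route is more self-contained and avoids an external citation; the paper's route scales more easily to $K$ because it avoids computing a $3\times 3$ inverse. Both arrive at the same decomposition, and the specific shapes of $a,b,c,d$ are indeed dictated, as you observe, by choosing a factorisation of $\Sigma_{\mathrm{res}}$ in which $X_1$ carries the $WH$ correlation and $X_2$ is orthogonal.
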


Having generated $W_{s,r}$ and $H_{s,r}$ via the theorem above, we use Chen's relation to compute $W_r$ and $\bar{H}_r$.

\subsubsection{Both space-time and space-time-time L\'{e}vy areas}
\label{sec:theory:general_times:whk}
We consider $Y \coloneqq (W, H, K)$, and $\barY = (W, \barH, \barK )$. Then, given $\barY_s$ and $\barY_u$, we want to generate $Y_r$. We first compute $W_{s,u}, \bar{H}_{s,u}, \bar{K}_{s,u}$ using Chen's relation as in \cref{sec:theory:midpoint:whk}, and rescale to obtain $H_{s,u} = \frac{1}{u-s} \bar{H}_{s,u} $ and $ K_{s,u} = \frac{1}{(u-s)^2} \bar{K}_{s,u}$. The following theorem (which is new here) gives a rule for generating $Y_{s,r}$ conditional on $Y_{s,u}$, which in this case is equivalent to conditioning on $Y_s$ and $Y_u$. The proof is in \cref{sec:proofs:arbitrary_whk}


\begin{restatable}[General Brownian bridge for (W, H, K)]{theorem}{arbitraryWHK}
\label{thm:theory:arbitrary_whk}
For any $0 \leq s < u$ and $r \in [s,u]$ the distribution of $Y_{s,r}$ conditioned on $Y_{s,u}$ is Gaussian with the following mean:
\begin{align}
\label{eq:theory:mean_barY_r}
\begin{split}
     \E[W_{s,r} | Y_{s,u}] &= \frac{r-s}{u-s} W_{s,u} + 6 \frac{(r-s)(u-r)}{(u-s)^2} H_{s,u} + 120 \frac{(r-s)(u-r)(\frac{1}{2}(u+s) - r)}{(u-s)^3} K_{s,u}, \\[1mm]
    \E[H_{s,r} | Y_{s,u}] &= \frac{(r-s)^2}{(u-s)^2} H_{s,u} + 30 \frac{(r-s)^2(u-r)}{(u-s)^3} K_{s,u}, \\[1mm]
    \E[K_{s,r} | Y_{s,u}] &= \frac{(r-s)^3}{(u-s)^3} K_{s,u},
\end{split}
\end{align}

and the following covariance matrix:
\begin{align}
\label{eq:theory:general_times:whk:sigma}
\begin{split}
    \vSigma_r \coloneqq \E{\left( Y_{s,r} - \E[Y_{s,r} | Y_{s,u} ] \right) \left( Y_{s,r} - \E[Y_{s,r} | Y_{s,u} ] \right)^T \; \big\vert \; Y_{s,u} } = \begin{bmatrix}
        \Sigma^{\text{WW}}_r & \Sigma^{\text{WH}}_r & \Sigma^{\text{WK}}_r \\
        \Sigma^{\text{WH}}_r & \Sigma^{\text{HH}}_r & \Sigma^{\text{HK}}_r \\
        \Sigma^{\text{WK}}_r & \Sigma^{\text{HK}}_r & \Sigma^{\text{KK}}_r
    \end{bmatrix},
\end{split}
\end{align}
with coefficients
\begin{align*}
    \Sigma^{\text{WW}}_r &= \frac{(r-s) (u-r) \left(\left(2r - u - s\right)^{4} + 4 (r-s)^{2} (u-r)^{2}\right)}{\left(u-s\right)^{5}}, \\[2mm]
    \Sigma^{\text{WH}}_r &= - \frac{(r-s)^{3} (u-r) \left((r-s)^{2} - 3 (r-s) (u-r) + 6 (u-r)^{2}\right)}{2 \left(u-s\right)^{5}}, \\[2mm]
    \Sigma^{\text{WK}}_r &= \frac{(r-s)^{4} (u-r) \left(2r - u - s\right)}{12 \left(u-s\right)^{5}}, \\[2mm]
    \Sigma^{\text{HH}}_r &= \frac{r-s}{12} \left(1 - \frac{(r-s)^{3} \left((r-s)^{2} + 2 (r-s) (u-r) + 16 (u-r)^{2}\right)}{\left(u-s\right)^{5}}\right), \\[2mm]
    \Sigma^{\text{HK}}_r &= - \frac{(r-s)^{5} (u-r)}{24 \left(u-s\right)^{5}}, \qquad
    \Sigma^{\text{KK}}_r = \frac{r-s}{720} \left(1 - \frac{(r-s)^{5}}{\left(u-s\right)^{5}}\right).
\end{align*}

\end{restatable}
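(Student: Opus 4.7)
The proof rests on the fact that each of $W_{s,r}, H_{s,r}, K_{s,r}, W_{s,u}, H_{s,u}, K_{s,u}$ can be written as a Wiener integral $\int_s^u \phi(\tau)\, dW_\tau$ against a deterministic integrand, so the six variables are jointly Gaussian and their conditional means and covariances reduce to orthogonal projection in $L^2([s,u])$.

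My first step is to derive explicit integrands. Trivially $W_{s,r} = \int_s^u \mathbf{1}_{[s,r]}(\tau)\, dW_\tau$, and applying the stochastic Fubini theorem to the definitions in \cref{def:intro:levy_bb} yields
\[
    H_{s,r} = \int_s^u \phi^H_{s,r}(\tau)\, dW_\tau, \qquad K_{s,r} = \int_s^u \phi^K_{s,r}(\tau)\, dW_\tau,
\]
for explicit polynomial integrands $\phi^H_{s,r}, \phi^K_{s,r}$ supported on $[s,r]$; concretely $\phi^H_{s,r}(\tau) = \tfrac{1}{r-s}\bigl(\tfrac{r+s}{2}-\tau\bigr)$, with an analogous quadratic expression for $\phi^K_{s,r}$.

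The crucial observation is that $W_{s,u}, H_{s,u}, K_{s,u}$ are, up to scalar factors, the Wiener integrals of the first three shifted Legendre polynomials $P_0, P_1, P_2$ on $[s,u]$: a short calculation gives $W_{s,u} = \int_s^u P_0\, dW$, $H_{s,u} = -\tfrac{1}{2}\int_s^u P_1\, dW$ and $K_{s,u} = \tfrac{1}{12}\int_s^u P_2\, dW$. By $L^2$-orthogonality of the Legendre polynomials these three variables are mutually independent, and conditioning on $Y_{s,u}$ is equivalent to conditioning on the first three Fourier--Legendre coefficients of $W$ on $[s,u]$. Standard Gaussian projection then supplies, for any deterministic $\phi, \psi \in L^2([s,u])$,
\[
    \mathbb{E}\!\left[\int_s^u \phi\, dW \,\Big|\, Y_{s,u}\right] = \sum_{n=0}^{2} \frac{\langle \phi, P_n\rangle}{\|P_n\|^2} \int_s^u P_n\, dW,
\]
while the conditional covariance of $\int \phi\, dW$ and $\int \psi\, dW$ equals the residual inner product $\langle \phi, \psi\rangle - \sum_{n=0}^2 \langle \phi, P_n\rangle \langle \psi, P_n\rangle / \|P_n\|^2$. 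Substituting $\phi, \psi \in \{\phi^W_{s,r}, \phi^H_{s,r}, \phi^K_{s,r}\}$ and rewriting each $\int P_n\, dW$ as the corresponding multiple of $W_{s,u}, H_{s,u}, K_{s,u}$ reproduces \eqref{eq:theory:mean_barY_r} and the six distinct entries of $\vSigma_r$.

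I expect the main obstacle to be algebraic rather than conceptual: each covariance entry is a polynomial inner product on $[s,u]$ minus three Legendre projections, and substantial manipulation is needed to rearrange the result into the compact factored form involving $r-s$, $u-r$ and $2r-u-s$ that appears in the statement. The conditional-mean calculations come out cleanly and reproduce the $(W,H)$ case of \cref{thm:theory:arbitrary_wh} as a sanity check, but confirming each $\Sigma^{\bullet\bullet}_r$ will require careful bookkeeping, ideally verified with a computer algebra system.
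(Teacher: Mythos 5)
Your proposal is correct, and it takes a genuinely different route from the paper, although the two share the same Gaussian-projection core. The paper's proof imports the ``Brownian polynomial'' $\E[W_{s,r}\mid Y_{s,u}]$ as a black box from \cite{foster2020optimalPoly}, then propagates it to $\E[H_{s,r}\mid Y_{s,u}]$ and $\E[K_{s,r}\mid Y_{s,u}]$ by substituting the bridge identities of \cref{eq:theory:arbitrary_whk:bridge_bridge} into the definitions of $H$ and $K$, and finally obtains $\vSigma_r$ by the explained-versus-total decomposition $\E[Y_{s,r}Y_{s,r}^T]-\E[\widetilde{Y}_{s,r}\widetilde{Y}_{s,r}^T]$, using the known marginal variances $\tfrac{r-s}{1},\tfrac{r-s}{12},\tfrac{r-s}{720}$ and independence of $W_{s,u},H_{s,u},K_{s,u}$. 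You instead write every quantity as a Wiener integral and observe that $\big(W_{s,u}, -2H_{s,u}, 12K_{s,u}\big)$ are exactly the first three Fourier--Legendre coefficients of $W$ on $[s,u]$, so that conditioning on $Y_{s,u}$ is orthogonal projection onto $\operatorname{span}\{P_0,P_1,P_2\}$ in $L^2([s,u])$; one residual-inner-product formula then yields mean and covariance simultaneously. This is a more self-contained derivation --- it effectively re-proves the cubic polynomial the paper cites --- and it makes the independence of $W_{s,u},H_{s,u},K_{s,u}$ structural (Legendre orthogonality) rather than an imported fact. Two small points worth tightening if you write this up: state the integrand $\phi^K_{s,r}$ explicitly (it is $\phi^K_{s,r}(\tau)=\tfrac{(r-\tau)(s-\tau)}{2(r-s)^2}+\tfrac{1}{12}$ on $[s,r]$, i.e.\ $\tfrac{1}{12}P_2$ scaled to $[s,r]$), and note that the projection formula requires the integrands for $Y_{s,r}$ to be viewed as elements of $L^2([s,u])$ supported on $[s,r]$, which you do implicitly.
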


\begin{remark}
    $\E[W_{s,r} | Y_{s,u}]$ follows a cubic polynomial, which was first shown in \cite{foster2020optimalPoly} and \cite{habermann2021semicircle}.
\end{remark}

We can generate the multivariate normal $\widehat{Y}_{s,r} \sim \normalof{0, \vSigma_r}$ using a singular-value decomposition, which is not very computationally expensive, since $\vSigma_r$ is only a $3 \times 3$ matrix. A slightly more efficient choice would be the Cholesky decomposition, but $\vSigma_r$ is singular when $r=s$ or $r=u$, so the Cholesky decomposition can become inaccurate when $|r-s|$ or $|u-r|$ are small.

Finally we compute $\left( W_{s,r}, H_{s,r}, K_{s,r} \right) = \widehat{Y}_{s,r} + \E[Y_{s,r} | Y_{s,u}]$, rescale the L\'{e}vy areas and use Chen's relation to compute $W_r, H_r$ and $K_r$.



\section{Implementation}
\label{sec:Implementation}

We can now incorporate the L\'{e}vy-area-endowed versions of the $\bridge$ and $\finalinterp$ functions from Sections \ref{sec:theory:midpoint} and  \ref{sec:theory:general_times} respectively into the \vbt algorithm. The full pseudocode for $\bridge$ and $\finalinterp$ can be found in \cref{appendix:updated_bridge_finalinterp}.

In the interest of efficiency and numerical stability we also introduce two further changes to the algorithm:
\begin{itemize}
    \item Instead of $Y_s$ and $Y_u$, we keep track of $\bar{Y}_s, \bar{Y}_u$ and $\bar{Y}_{s,u}$\footnote{Note that this is just for computational convenience inside a single evaluation of the VBT, and should not be confused with computing $Y_{r_0, r_1}$. In that case we first separately evaluate the VBT for $Y_{r_0}$ and $Y_{r_1}$, before computing $Y_{r_0, r_1}$ from these.} which lets us avoid repeatedly rescaling the L\'{e}vy areas and saves us an additional application of Chen's relation in the $\bridge$ function.
    \item We internally normalise the interval of definition (see \cref{sec:IntervalNormalisation}).
\end{itemize}

\subsection{Interval Normalisation}
\label{sec:IntervalNormalisation}

In order to make the calculations simpler, as well as more efficient and numerically stable, we modify the \vbt so that its interval of definition $[t_0, t_1]$ is internaly normalised to $[0,1]$. The tolerance for discretising the interval is also rescaled accordingly. When the Brownian motion and Lévy areas are evaluated at a given time $r$, this is first rescaled to $\hat{r} \coloneqq \frac{r - t_0}{t_1 - t_0}$. After the computations are done in the normalized space, the results are then rescaled back from $[0,1]$ to the original interval $[t_0, t_1]$ using Brownian scaling, i.e.~$X_r^{\text{out}} = \sqrt{t_1 - t_0} X_{\hat{r}}$ for $X \in \{W, H, K\}$.

This provides three main benefits:
\begin{itemize}
    \item With normalisation, all the times appearing in the tree are exact dyadic rationals, and are thus represented exactly in floating-point format. This eliminates many potential sources of error.
    \item The midpoint update rule has a nicer form and is about $20 \%$ faster to compute.
    \item If $t_1$ and $t_0$ are very large, but close together, the original implementation could suffer from catastrophic cancellation errors. Without normalisation, these would appear in many calculations as we descend the tree, causing significant compounding errors. These are avoided with our construction.
    
\end{itemize}

\subsection{The main loop}

Our algorithm depends on a \texttt{levy\_area} parameter, governing which types of L\'{e}vy area will be computed. We present the case when \texttt{levy\_area = `space-time-time'}, i.e.~when all of $W, H $ and $ K$ are being computed. The $(W, H)$ case with \texttt{levy\_area = `space-time'} is obtained by simply removing all references to $K$, or removing both $H$ and $K$ when \texttt{levy\_area = `none'}, so only Brownian increments are generated.

We introduce a helper function \texttt{rescale} which converts $\bar{Y} = (W, \bar{H}, \bar{K})$ to $Y = (W, H, K)$.

\begin{algorithm}[H]
\caption{\texttt{rescale}}\label{alg:rescale}

\hspace*{2.5mm} \textbf{Input:} $h$ - time increment, $\bar{Y}$ - Brownian motion and L\'{e}vy areas
\vspace{1mm}
\begin{algorithmic}
\State $(W, \bar{H}, \bar{K}) \gets \bar{Y}$
\State $H \gets h^{-1} \bar{H}, \;\; K \gets h^{-2} \bar{K}$
\State \Return $(W, H, K)$
\end{algorithmic}
\end{algorithm}

Now we can state the algorithm for evaluating the \vbt  at a single time $r$.

\begin{algorithm}[H]
\caption{\vbteval with space-time-time L\'{e}vy area}\label{alg:main}

\hspace*{2.5mm} \textbf{Input:} $r$ - evaluation time, $[t_0, t_1]$ - interval of definition, $\rho$ - PRNG seed, $d$ - dimension, $\varepsilon$ - tolerance.
\vspace{1mm}

\hspace*{2.5mm} \textbf{Result:} Approximation to $\big(W_{t_0,r}, \, H_{t_0,r}, \, K_{t_0,r} \big)$
\vspace{1mm}

\begin{algorithmic}
\setstretch{1.5}
\State $\hat{r} \gets \frac{r - t_0}{t_1 - t_0}$
\State $l \gets 0, \;\; s \gets 0, \;\; u \gets 1$ \Comment{$l$ is the current level in the tree and $u-s = 2^{-l}.$}
\State $\rho, \widehat{\rho}_W, \widehat{\rho}_H, \widehat{\rho}_K \gets \texttt{split\_seed}(\rho, 4)$
\State $W_u \gets \normalof{0, \eye_d, \widehat{\rho}_W}, \; H_u \gets \normalof{0, \frac{1}{12} \eye_d, \widehat{\rho}_H}, \; K_u \gets \normalof{0, \frac{1}{720} \eye_d, \widehat{\rho}_K} $
\State $\bar{Y}_s \gets (0, 0, 0), \;\; \bar{Y}_u \gets (W_u, H_u, K_u), \;\; \bar{Y}_{s,u} \gets \bar{Y}_u$
\While{$2^l > \frac{\varepsilon}{t_1 - t_0}$}
    \State $\rho_1, \rho_2, \widehat{\rho} \gets \texttt{split\_seed}(\rho, 3)$
    \State $t \gets s + 2^{l+1}$
    \State $\bar{Y}_t, \, \bar{Y}_{s,t}, \, \bar{Y}_{t,u},  \gets \bridge  \big(l, s, \bar{Y}_s, \bar{Y}_u, \bar{Y}_{s,u}, \widehat{\rho}\, \big)$
    
    {\setstretch{1.1}
    \If{ $\hat{r} \leq t$ }
        \State $\rho \gets \rho_1$    \Comment{Descend into the \textbf{left} child.}
        \State $s \gets s, \; u \gets t$
        \State $\bar{Y}_s \gets \bar{Y}_s, \; \bar{Y}_u \gets \bar{Y}_t, \; \bar{Y}_{s,u} \gets \bar{Y}_{s,t}$
    \Else
        \State $\rho \gets \rho_2$  \Comment{Descend into the \textbf{right} child.}
        \State $s \gets t, \; u \gets u$
        \State $\bar{Y}_s \gets \bar{Y}_t, \; \bar{Y}_u \gets \bar{Y}_u, \; \bar{Y}_{s,u} \gets \bar{Y}_{t,u}$
    \EndIf
    }
    \State $l \gets l + 1$
\EndWhile

\State $\bar{Y}_{\hat{r}} \gets \finalinterp \big(s, r, u, \bar{Y}_s, \bar{Y}_u, \bar{Y}_{s,u}, \rho \big)$
\State $Y_{\hat{r}} \gets \texttt{rescale}(\hat{r}, \, \bar{Y})$ 
\State \Return $\sqrt{t_1 - t_0} \, Y_{\hat{r}}$ \Comment{Undo the effects of interval normalisation.}
\end{algorithmic}
\end{algorithm}

The usual use case is when instead of a single evaluation time $r$, we are provided with an interval $[r_0, r_1] \subseteq [t_0, t_1]$ and want the value of $Y_{r_0, r_1}.$ In this case, we first compute $\bar{Y}_{\hat{r}_0}$ and $\bar{Y}_{\hat{r}_1}$ using \cref{alg:main} (stopping at the line with \finalinterp) and apply Chen's relation (\cref{eq:theory:Chen_rel}) to obtain $\bar{Y}_{\hat{r}_0, \hat{r}_1}$. We then compute $Y_{\hat{r}_0, \hat{r}_1} = \texttt{rescale} \big( \hat{r}_1 - \hat{r}_0, \, \bar{Y}_{\hat{r}_0, \hat{r}_1} \big)$
and finally undo the normalisation: $Y_{r_0, r_1} = \sqrt{t_1 - t_0} Y_{\hat{r}_0, \hat{r}_1}.$

\subsection{Matching the true distribution}
\label{sec:impl:true_distn}

Due to our careful construction of the $\finalinterp$ function, we can now prove the general case of \cref{prop:vbt:match_dist}, which also holds when L\'{e}vy area is present. We have already proven in \cref{sec:theory:general_times} that conditional on $Y_s$ and $Y_u$, the output of $\finalinterp  \big(l, s, \bar{Y}_s, \bar{Y}_u, \bar{Y}_{s,u}, \rho \big)$ has the correct distribution. Theorems \ref{cor:theory:midpoint_wh} and \ref{thm:theory:midpoint_whk} further show that for all points in $V = \set{t_0 + (t_1 - t_0) k 2^{-L} : k \in \set{0, 1, \ldots , 2^L} }$ the generated random variables $\left( Y_v \right)_{v\in V}$ have the correct joint distribution.

We first need the following lemma:
\begin{lemma}[$Y$ is a Markov process]
\label{lem:impl:markovian}
    The processes $( W_t, H_t, K_t )_{t \geq 0}$, $( W_t, \bar{H}_t, \bar{K}_t )_{t \geq 0}$, $( W_t, H_t )_{t \geq 0}$, and $( W_t, \bar{H}_t )_{t \geq 0}$ are all Markov processes.
\end{lemma}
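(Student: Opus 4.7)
The plan is to prove the Markov property by showing that for any $0 \leq s < u$ the future state $\bar{Y}_u := (W_u, \bar{H}_u, \bar{K}_u)$ can be written as a deterministic function of the present state $\bar{Y}_s$ and an ``increment" $\bar{Y}_{s,u} := (W_{s,u}, \bar{H}_{s,u}, \bar{K}_{s,u})$ that is independent of the past. The algebraic decomposition is supplied by Chen's relation, already proved as \cref{thm:theory:Chen_rel}, so essentially all that needs to be checked is the independence claim and the coincidence of filtrations.

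First, I would note that $\bar{H}_t$ and $\bar{K}_t$ are deterministic Riemann/It\^o integral functionals of the Brownian path $(W_q)_{q \in [0,t]}$, so the natural filtration of $(W_t, \bar{H}_t, \bar{K}_t)_{t \geq 0}$ equals $\mathcal{F}_s^W := \sigma(W_r : r \leq s)$. Next, unpacking \cref{def:intro:levy_bb} and \cref{def:theory:rescaled_levy}, each of $W_{s,u}, \bar{H}_{s,u}, \bar{K}_{s,u}$ is a measurable functional of only the centered path $\{W_q - W_s : q \in [s,u]\}$; for example
\[
\bar{H}_{s,u} = \int_s^u (W_q - W_s)\, dq - \tfrac{u-s}{2}(W_u - W_s),
\]
and similarly $\bar{K}_{s,u}$ is a weighted integral of these shifted increments. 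The independent-increments property of Brownian motion then gives that $\bar{Y}_{s,u}$ is independent of $\mathcal{F}_s^W$. Rearranging \cref{eq:theory:chen_from_zero} yields an explicit measurable map $\Phi_{s,u}$ with $\bar{Y}_u = \Phi_{s,u}(\bar{Y}_s, \bar{Y}_{s,u})$ (e.g.\ $\bar{H}_u = \bar{H}_s + \bar{H}_{s,u} + \tfrac{u-s}{2}W_s - \tfrac{s}{2}W_{s,u}$, and analogously for $\bar{K}$). A standard frozen-conditioning argument then gives
\[
\E[g(\bar{Y}_u) \mid \mathcal{F}_s^W] = \E[g(\bar{Y}_u) \mid \bar{Y}_s]
\]
for every bounded measurable $g$, which is the Markov property for $(W, \bar{H}, \bar{K})$.

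The three remaining cases would be handled as immediate consequences: for $(W, \bar{H})$, drop the $\bar{K}$-coordinate throughout the argument above; for $(W, H, K)$ and $(W, H)$, observe that $(W_t, \bar{H}_t, \bar{K}_t) \mapsto (W_t, \bar{H}_t/t, \bar{K}_t/t^2)$ (with the convention $H_0 = K_0 = 0$) is a time-dependent measurable bijection for $t > 0$, and such bijections preserve both the natural filtration and the Markov property. I do not expect a real obstacle here: the decomposition is purely algebraic once Chen's relation is in hand, and independence is immediate from the independent-increments property of Brownian motion; the only mild care needed is in treating the $t = 0$ boundary when passing to the unrescaled processes.
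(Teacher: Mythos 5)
Your proof is correct, but it follows a genuinely different route from the paper's. The paper derives an explicit (time-inhomogeneous) SDE satisfied by $(W_t,\bar H_t,\bar K_t)$, namely $dW_t=dW_t$, $d\bar H_t=\tfrac{W_t}{2}\,dt-\tfrac{t}{2}\,dW_t$, $d\bar K_t=(\bar H_t-\tfrac{7}{12}tW_t)\,dt+\tfrac{t^2}{12}\,dW_t$, and then invokes the standard fact that a process solving an SDE driven by $W$ with state-dependent coefficients is Markov; the $(W,H,K)$ case then follows by the time-dependent bijection. You instead work from first principles: you express $\bar H_{s,u}$ and $\bar K_{s,u}$ as measurable functionals of the shifted path $\{W_q-W_s:q\in[s,u]\}$, invoke independent increments to get $\bar Y_{s,u}\perp\mathcal F_s^W$, and then use Chen's relation (cref{eq:theory:chen_from_zero}) to write $\bar Y_u=\Phi_{s,u}(\bar Y_s,\bar Y_{s,u})$ so that a frozen-conditioning argument closes the proof. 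Both arguments are valid. Yours is more self-contained and has the advantage of exhibiting the transition kernel explicitly (integrate over the known law of $\bar Y_{s,u}$), which is precisely what \cref{prop:impl:match_dist} ultimately exploits; the paper's SDE representation is shorter at the price of appealing to a black-box Markov-for-SDEs theorem and, as written, is slightly imprecise in that the drift and diffusion coefficients depend explicitly on $t$, making the process a time-inhomogeneous Markov process rather than one whose generator depends only on the state. Your handling of the $t=0$ boundary in the passage from $(\bar H,\bar K)$ to $(H,K)$ is the right level of care: at $s=0$ both states are deterministic, so the conditioning is vacuous and the bijection argument for $t>0$ suffices.
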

\begin{proof}
    Rearranging the definitions of $\barH$ and $\barK$ we obtain
    \begin{align*}
        \barH_t &= \int_0^t W_s \, ds \, - \, \frac{t}{2} W_t, \\[2mm]
        \barK_t &= \frac{t}{2} \int_0^t W_s \, ds - \int_0^t s W_s \, ds - W_t \int_0^t \frac{s}{t} \left( \frac{t}{2} -s \right) ds.
    \end{align*}
    Using this, we can write the process $\left( W_t, H_t, K_t \right)$ as an SDE:
    \begin{align}
    \label{eq:impl:whk_sde}
    \begin{split}
        dW_t &= dW_t, \quad d\bar{H}_t = \frac{W_t}{2} \, dt - \frac{r}{2} \, dW_t, \\
        d\bar{K}_t &= \Big( \barH_t - \frac{7}{12} t W_t \Big) dt + \frac{t^2}{12} \, dW_t.
    \end{split}
    \end{align}
    Since all of the SDE coefficients only depend on the current state of $\left( W_t, H_t, K_t \right)$, the process is Markovian.

    Moreover, as $\big(W_r, H_r, K_r \big)$ is a deterministic function of $\big( W_r, \bar{H}_r, \bar{K}_r \big)$, it is also Markovian. The same arguments apply for the pair $(W, H)$.
\end{proof}

We are now ready to prove the general version of \cref{prop:vbt:match_dist}.

\begin{theorem}[Distributional correctness of the Virtual Brownian Tree with L\'{e}vy  area]

\label{prop:impl:match_dist}

    Let $Y$ be any of $W$, $(W, H)$, or $(W, H, K)$ and let \texttt{levy\_area} be set to \texttt{`none'}, \texttt{`space-time'}, or \texttt{`space-time-time'} respectively. Let $0 \leq t_0 < t_1$, $L, d \in \naturals$, $\varepsilon = (t_1 - t_0) 2^{-L}$ and let $\rho$ be a PRNG seed. Define the generated path $y$ as
    \[
    y_r \coloneqq \vbteval \! \left(r, [t_0, t_1], \rho, d, \varepsilon, \texttt{levy\_area} \right).
    \]

    Let $V = \set{t_0 + (t_1 - t_0) k 2^{-L} : k \in \set{0, 1, \ldots , 2^L} }$ be the set of vertices of the VBT and let $S = \set{r_1, \ldots , r_N} \subset [t_0, t_1]$ be a set of query-times such that for all $0 \leq n < N$ we have $r_n \leq r_{n+1}$ and $ \left\vert V \cap [r_n, r_{n+1}] \right\vert \geq 1$.

    Suppose that the seed $\rho$ is chosen uniformly at random (but is fixed throughout). Then the joint distribution of $y$ sampled at points from $S$ matches the joint distribution of $Y$ sampled at the same points. That is,
    \begin{equation*}
        \left( y_r \right)_{r \in S} \eqd \left( Y_r \right)_{r \in S}.
    \end{equation*}
\end{theorem}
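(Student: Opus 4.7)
The plan is to prove the theorem in two stages: first establish that the VBT produces the correct joint distribution on the dyadic vertex set $V$, and then, conditional on these vertex values, handle the intermediate query points using a Markov-bridge argument.

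For the first stage, I would induct on the depth $l$ of the tree, proving that $(y_v)_{v \in V_l} \eqd (Y_v)_{v \in V_l}$ where $V_l$ denotes the vertices of depth $\leq l$. The base case at $l=0$ holds because the root directly samples $\bar{Y}_{t_0, t_1}$ from Gaussians with the correct independent marginals $\normalof{0, \eye_d}$, $\normalof{0, \frac{1}{12}\eye_d}$, $\normalof{0, \frac{1}{720}\eye_d}$ (and the Brownian scaling at the end restores the correct variances on $[t_0, t_1]$). For the inductive step, the midpoint bridge functions in \cref{cor:theory:midpoint_wh} and \cref{thm:theory:midpoint_whk} ensure that inserting the midpoint $Y_t$ of each interval $[s, u]$ at depth $l$ produces the correct conditional distribution given $(Y_s, Y_u)$. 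Because the splittable PRNG assigns independent seeds to disjoint subtrees, and because $Y$ is Markov (Lemma \ref{lem:impl:markovian}), the correct marginal conditional distributions combine into the correct joint distribution at depth $l+1$.

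For the second stage, I would use the spacing assumption $|V \cap [r_n, r_{n+1}]| \geq 1$ to observe that each inter-vertex gap contains at most one query point from $S$ (query points coinciding with a vertex are handled trivially by $\finalinterp$). For each $r \in S$ lying strictly inside a gap $[s, u]$ with $s, u \in V$ consecutive, the $\finalinterp$ call uses \cref{thm:theory:arbitrary_wh}, \cref{thm:theory:arbitrary_whk}, or \cref{eq:vbt:bridge-final-interp} to generate $y_r$ from an independent seed such that conditional on $(y_s, y_u)$ it has the distribution of $Y_r$ given $(Y_s, Y_u)$. By the Markov property of $Y$, the full conditional law of $Y_r$ given $(Y_v)_{v \in V}$ depends only on the flanking endpoints $(Y_s, Y_u)$, and different gaps are conditionally independent given $(Y_v)_{v \in V}$. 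Matching this with stage one and with the independence of PRNG seeds across gaps yields $(y_r)_{r \in S} \eqd (Y_r)_{r \in S}$.

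The main obstacle is making rigorous the Markov-bridge factorization: namely, that for the vector-valued process $Y$, the conditional law of $(Y_r)_{r \in S}$ given $(Y_v)_{v \in V}$ factorizes across gaps and each factor depends only on the two flanking vertex values. This is a standard consequence of the Markov property, but it must be invoked for the augmented process $(W, H, K)$ whose Markovianity is itself non-obvious and rests on the SDE representation in Lemma \ref{lem:impl:markovian}. The remaining ingredients — the correctness of midpoint and general-time bridges, and the independence structure of the splittable PRNG — are already in place, so the proof reduces to carefully threading these results through the recursion.
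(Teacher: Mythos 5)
Your proposal matches the paper's own proof closely: both rely on the same four ingredients — the Markov property of $Y$ from \cref{lem:impl:markovian}, the correctness of the midpoint bridges (\cref{cor:theory:midpoint_wh}, \cref{thm:theory:midpoint_whk}) for the dyadic vertices, the correctness of the general-time bridges (\cref{thm:theory:arbitrary_wh}, \cref{thm:theory:arbitrary_whk}) inside each gap, and the conditional independence induced by the splittable PRNG together with the spacing assumption $|V \cap [r_n, r_{n+1}]| \geq 1$. The paper simply carries out your two-stage plan more formally, reducing via the monotone class theorem to indicator functionals and then applying a tower-law chain of equalities that substitutes each of these four facts in sequence; there is no genuinely different idea in either direction.
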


\begin{proof}
    By the monotone class theorem, it is enough to prove that for any choice of measurable sets \\ $G_1, \ldots, G_N \subseteq \reals^d$ we have $\P \! \left( y_{r_n} \in G_n \text{ for } n = 1, \ldots N \right) = \P \! \left( Y_{r_n} \in G_n \text{ for } n = 1, \ldots N \right)$
    or equivalently
    \[
    \E{\prod_{n=1}^N \indicator_{G_n} \left(y_{r_n} \right)} = \E{\prod_{n=1}^N \indicator_{G_n}\left(Y_{r_n} \right)}.
    \]
    
    For $r \in [t_0, t_1]$, we write $\topV{r} \coloneqq \min \set{v \in V: v \geq r}$ and $\botV{r} \coloneqq \max \set{v \in V: v \leq r}$ (note that these are always well defined since $t_0, t_1 \in V$).

    We will make use of the following facts:
    \begin{enumerate}
        \item Due to our PRNG-splitting method, the keys at the leaves of the \vbt are independent and hence if $t_0 \leq q < r \leq t_1$ and $V \cap [q, r]$ is non-empty, then $y_r$ and $y_q$ are conditionally independent given $y_{\botV{q}}, y_{\topV{q}}, y_{\botV{r}}, y_{\topV{r}}$.
        \item If $r \in [t_0, t_1]$ and $s = \botV{r}, u = \topV{r}$ then by our choice of interpolation (see theorems \ref{thm:theory:arbitrary_wh} and \ref{thm:theory:arbitrary_whk})
        \[
            \left( y_r | y_s, y_u \right) \eqd \left( Y_r | Y_s = y_s, Y_u = y_u \right).
        \]
        \item By theorems \ref{cor:theory:midpoint_wh} and \ref{thm:theory:midpoint_whk}, we have $\left( y_v \right)_{v \in V} \eqd \left( Y_v \right)_{v \in V}.$
        \item By \cref{lem:impl:markovian}, $Y$ is Markovian, so for any $t \geq 0$, the processes $(Y_s)_{s\leq t}$ and $(Y_{t,s})_{s\geq t}$ are independent conditional on $Y_t$. This implies that $Y$ has the same property as was shown for $y$ in fact 1 above.
    \end{enumerate}

    Using the above properties, we have
    \begin{align*}
        \E{\prod_{n=1}^N \indicator_{G_n} \left(y_{r_n} \right)} &= \E{ \E{\prod_{n=1}^N \indicator_{G_n} \left(y_{r_n} \right) \;\Big\vert\; \left( y_v \right)_{v \in V} } } && \text{(tower law)}\\
        &= \E{ \prod_{n=1}^N \E{\indicator_{G_n} \left(y_{r_n} \right) \;\Big\vert\; y_{\botV{r_n}}, y_{\topV{r_n}} } } &&  \text{(fact 1)}\\
        &= \E{ \prod_{n=1}^N \E{\indicator_{G_n} \left(Y_{r_n} \right) \;\Big\vert\; Y_{\botV{r_n}} = y_{\botV{r_n}}, Y_{\topV{r_n}} = y_{\topV{r_n}} } } &&  \text{(fact 2)}\\
        &= \E{ \prod_{n=1}^N \E{\indicator_{G_n} \left(Y_{r_n} \right) \;\Big\vert\; Y_{\botV{r_n}}, Y_{\topV{r_n}} } } &&  \text{(fact 3)}\\
        &= \E{ \E{\prod_{n=1}^N \indicator_{G_n} \left(Y_{r_n} \right) \;\Big\vert\; \left( Y_v \right)_{v \in V} } } = \E{ \prod_{n=1}^N \indicator_{G_n} \left(Y_{r_n} \right)} && \text{(fact 4 \& tower law)}
    \end{align*}
    
\end{proof}

\begin{remark}
    The key assumption here is that there is at most one sample point between each two consecutive dyadic points $v, v' \in V$, i.e.~$\left| S \cap [v, v'] \right| \leq 1.$ Without this requirement, we can find a simple counterexample. Let $q, r \in (v, v')$ and for simplicity consider the case $Y=W$. When evaluating $w_r$ and $w_q$, the same key is used in the call to $\finalinterp$, and hence the $Z$ in the computation of $w_r$ is the same as that in $w_q$. Therefore, knowing $w_v, w_{v'}$ and $w_r$ lets us deterministically compute the value of $w_q$. However, $W_q$ is not deterministic after we fix $W_v, W_{v'}$ and $W_r$. Therefore, the joint distribution of $(w_r, w_q)$ cannot be the same as that of $(W_r, W_q)$.
\end{remark}


\section{Application to adaptive SDE solvers}
\label{sec:application}
To conclude, we will demonstrate how the combination of adaptive time-stepping and high order solvers (which rely on Lévy areas) can result in significant speed-ups for SDE simulation.

\paragraph{VBT time complexity.} The Virtual Brownian Tree is based on binary search, so a single evaluation takes $\bigO{ \log \! \left( \frac{1}{\varepsilon} \right)}$ time, where $\varepsilon$ is the user-specified tolerance. If we are using an adaptive solver with a minimal step size of $h_{\text{min}}$, \cref{prop:impl:match_dist} tells us that as long as $\varepsilon \leq h_{\text{min}}$, then all Brownian samples drawn by the solver will have exactly the correct joint distribution. Therefore, if the solver takes $n$ computational steps, the total time complexity of a solve will be $\bigO{- \log(h_{\text{min}}) \, n}$.

\subsection{Cox-Ingersoll-Ross model}
\label{sec:application:cir}

Consider the Cox-Ingersoll-Ross (CIR) model from mathematical finance, in Stratonovich form:
\begin{equation}
    d X_t = a (\tilde{b} - X_t) dt + \sigma \sqrt{X_t} \circ dW_t, \quad \tilde{b} \coloneqq b - \frac{\sigma^2}{4 a}
\end{equation}
where $a, b, \sigma > 0$ are the mean reversion speed, mean reversion level, and volatility parameters respectively, and $W$ is a one-dimensional Brownian motion. It was proven by \cite{hefter2019slowCIR} that for any constant step size solver, its SOC (recall \cref{def:intro:strong_order}) can be made arbitrarily small by increasing the volatility $\sigma$.

The convergence of adaptive solvers is still an open question. In \cite{gaines97variable} and \cite{foster2023convergence} the authors prove that adaptive solvers converge to the correct limit given some assumptions about the SDEs or the step-size controller, but \cite{foster2023convergence} also present a counterexample when these assumptions are violated. Unfortunately, these theorems do not apply to the CIR model due to a non-Lipschitz diffusion coefficient -- so we will empirically check the correctness of convergence.

In our experiment, we compare two solvers: the Drift-Implicit Euler (DIE) solver by \cite{Alfonsi2005OnTD} and a High-order Strang splitting (HoSts) solver by \cite{foster2024highSIAM}, both with constant and adaptive time-stepping. In addition to being high-order, the HoSts solver can ensure that its solution is always non-negative as long as $\tilde{b} \geq 0$.

To design our step-size controller, we use the mean square error propagation principle \citep{foster2023convergence}, which asserts that unlike ODEs, where all $L_p$ errors propagate linearly, in SDEs it is the mean square errors that propagate linearly. Hence, to control the global error, we bound the local errors by $\epsilon_{\text{local}}^2 \leq C \, h \, \varepsilon^2$
where $h$ is the length of the current step, $\varepsilon$ is the global error tolerance and $C$ is a proportionality constant.
While $\epsilon_{\text{local}}$ can be estimated using embedded methods \citep{foster2023convergence}, or half-stepping,\footnote{Half stepping means that in each step, the solver makes a coarse computation over the whole step, as well as a finer computation with two half-steps, and takes the difference of the results as the local error.} we use an analytic estimate specific to CIR \citep{foster2020a}:  $\epsilon_{\text{local}} \propto \big| \frac{a \tilde{b} \sigma^2}{2 X_t} \big| \, h^2$.
This gives rise to a simple step-size controller, which sets $h \approx \left( X_t \, \varepsilon \right)^{\frac{2}{3}}.$ The code for our numerical experiments is available at \textcolor{blue}{\href{https://github.com/andyElking/Single-seed_BrownianMotion}{github.com/andyElking/Single-seed\_BrownianMotion}}.
\begin{figure}[H]
\begin{center}
\begin{subfigure}{.5\textwidth}
  \centering
  \includegraphics[width=.95\linewidth]{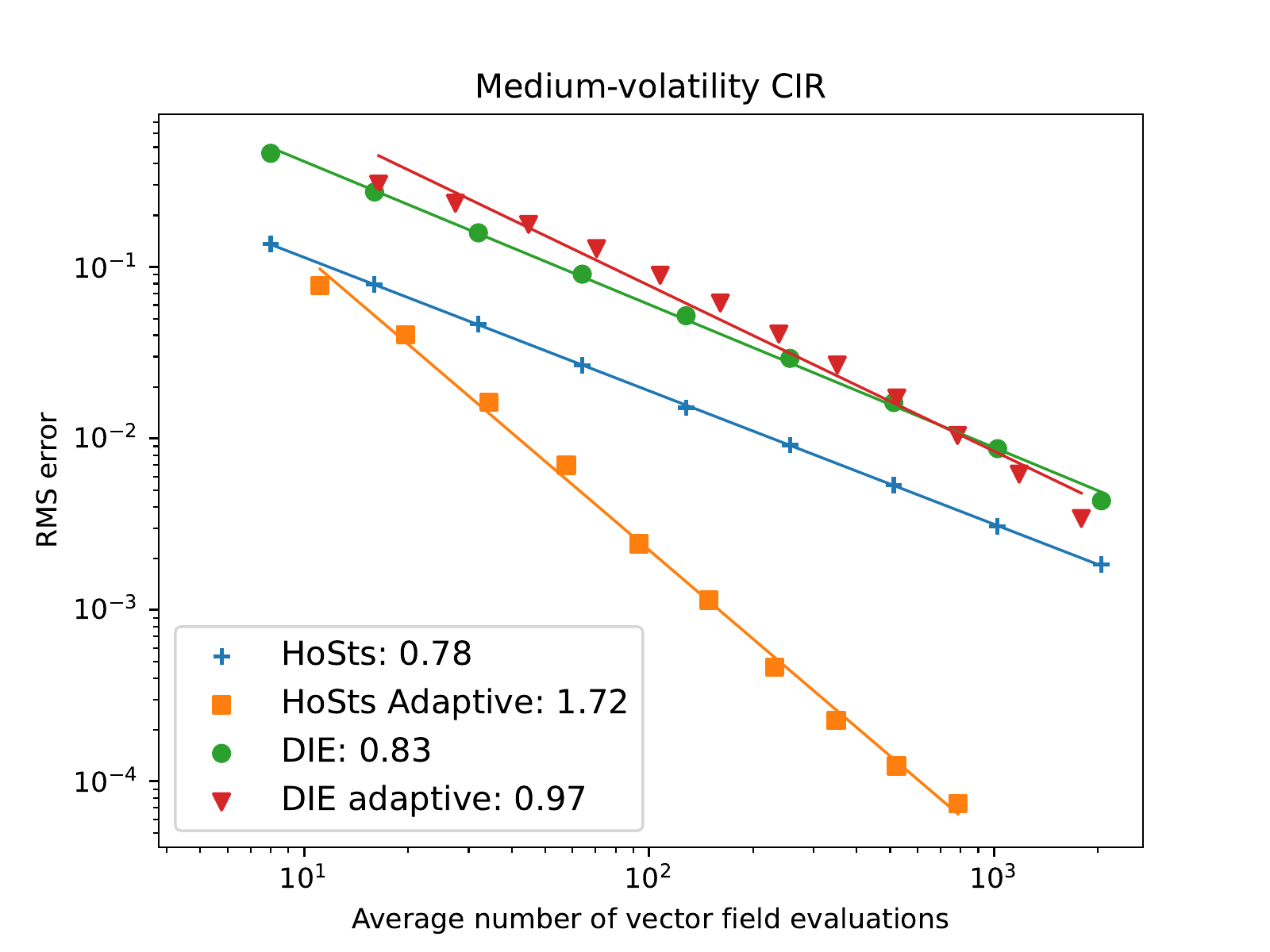}
\end{subfigure}%
\begin{subfigure}{.5\textwidth}
  \centering
  \includegraphics[width=.95\linewidth]{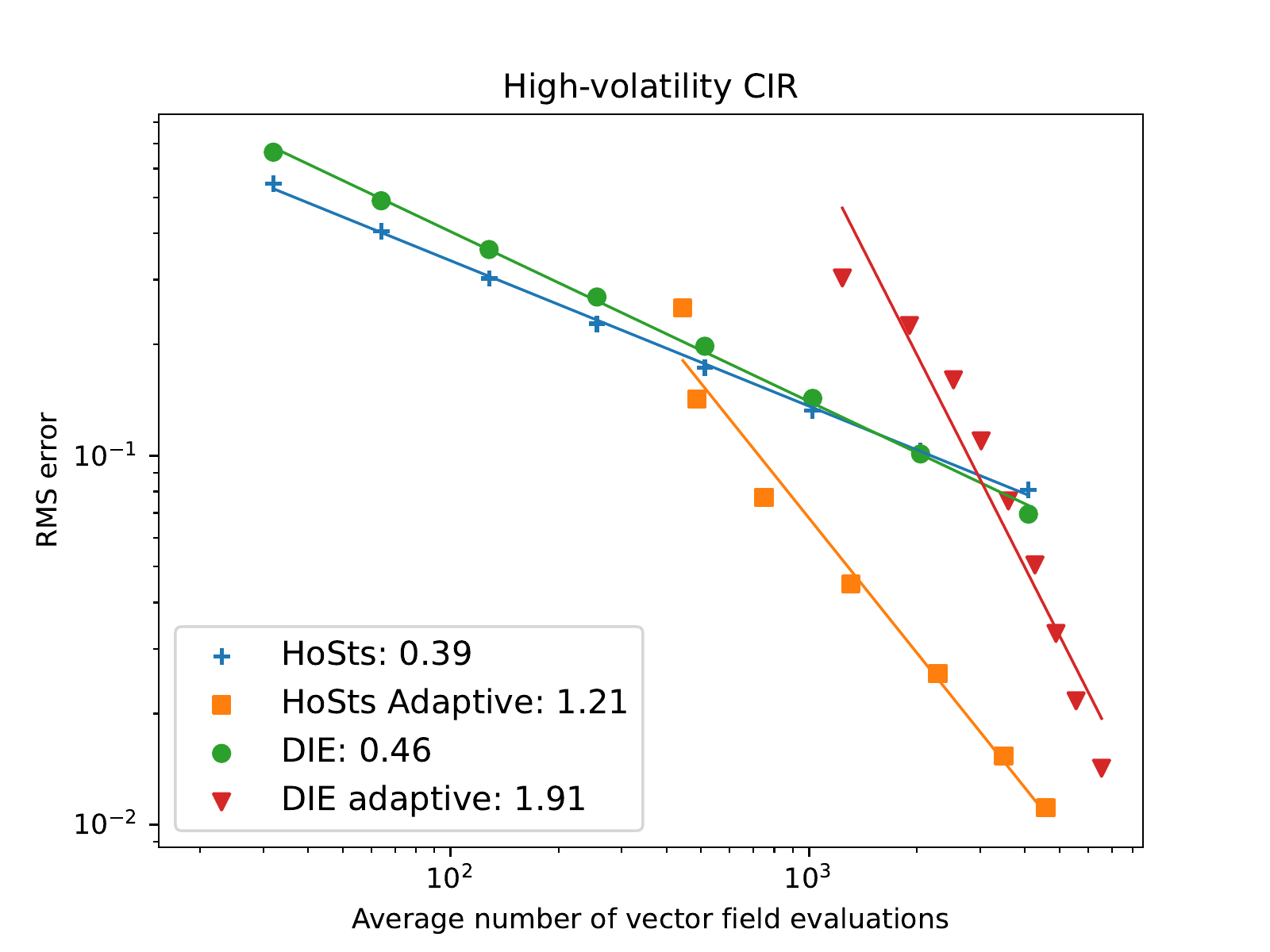}
\end{subfigure}
\caption{Comparison of the HoSts solver and Drift-Implicit Euler applied to the CIR process both with constant step size and adaptive time-stepping. We use $a=b=1$ with $\sigma = 1.5$ (left) and $\sigma = 2.5$ (right). The SOC of each solver is written in the legend. The results are averaged over 1000 random seeds.}
\label{fig:cir_order}
\end{center}
\end{figure}

\subsection{Langevin Monte Carlo}
\label{sec:application:langevin}

To demonstrate the combined capabilities of space-time-time Lévy area and adaptive time-stepping, we turn to a Markov Chain Monte Carlo (MCMC) algorithm based on the underdamped (or kinetic) Langevin diffusion (ULD). Using ULD for MCMC has recently received significant attention \citep{cheng18ULDMC, rioudurand2023MALT, DalRiou2020OnSampling,sanz-serna21wassersteinULD}, as it is the stochastic equivalent of Hamiltonian Monte Carlo (HMC) \citep{chen2014stochastic}. ULD is given by:
\begin{equation}
\label{eq:uld}
\begin{split}
    d \mathbf{x}_t &= \mathbf{v}_t \, dt \\
    d \mathbf{v}_t &= - \gamma \, \mathbf{v}_t \, dt - u \,
    \nabla \! f( \mathbf{x}_t ) \, dt + \sqrt{2 \gamma u} \, d W_t,
\end{split}
\end{equation}
where $\vx_t, \vv_t \in \R^d$, $W$ is a $d$-dimensional BM, $\gamma, u \in \R$ are the friction and damping coefficients and $f: \R^d \rightarrow \R$ is a potential function.
The stationary density of this SDE is $p(x) = C \exp(-f(x))$ where $C$ is a normalising constant.

ULD is a rare type of SDE for which a third order solver has been found -- namely the QUadrature Inspired and Contractive Shifted ODE with Runge-Kutta Three (QUICSORT) solver by \cite{scott2025quicsort}, which requires space-time-time Lévy area as input and can be used adaptively via half-stepping. We compare QUICSORT to the Euler-Maruyama method, which is the usual discretisation used for Langevin Monte Carlo, as well as to SRA1 -- a method developed for additive-noise SDEs by \cite{rossler2010runge}. We also compare to the No U-Turn Sampler by \cite{hoffman2011NUTS} which is often considered to be the gold standard for MCMC. For the adaptive stepping, we use a PI controller with $K_{P} = 0.1$ and $K_{I} = 0.4$ which are similar to the recommended values for SDEs by \cite{ilie2015adaptive}. We also use Diffrax's default value of 0.01 for the initial step size considered by the PI controller. To obtain the graph on the right of \cref{fig:qualitative_funnel}, we set the relative tolerance to $0$ and vary the absolute tolerance.

\subsubsection{Neal's Funnel}

As the first target distribution we chose a common MCMC benchmark called Neal's funnel \citep{neal2003}, given by the pair $x \sim \mathcal{N}(0, 9), \; y \sim \mathcal{N}(0, \exp(x) \mathbf{I}_{9})$. This is particularly challenging for Markov chain algorithms due to the very narrow funnel, which is hard for some samplers to enter into.

QUICSORT with constant steps seems to perform best in the strong convergence graph in \cref{fig:qualitative_funnel}, partly because the adaptive version uses half-stepping, which requires 3-times more vector field evaluations per step. However, constant stepping suffers from instabilities which produce points far outside the funnel. Despite removing all outliers with $\Vert z \Vert > 100$, constant-step solvers underperformed in \cref{tab:neals_funnel_results}. Furthermore, with adaptive stepping the solver automatically tries to use the minimal number of steps needed to achieve the user-specified precision. Hence, adaptive QUICSORT uses the fewest $\nabla f$ evaluations in \cref{tab:neals_funnel_results}.


\begin{figure}[H]
    \centering
    \begin{subfigure}{0.55\textwidth}
        \begin{subfigure}{0.49\linewidth}
            \centering
            \caption{\small{Euler constant step size}}
            \includegraphics[width=0.95\linewidth]{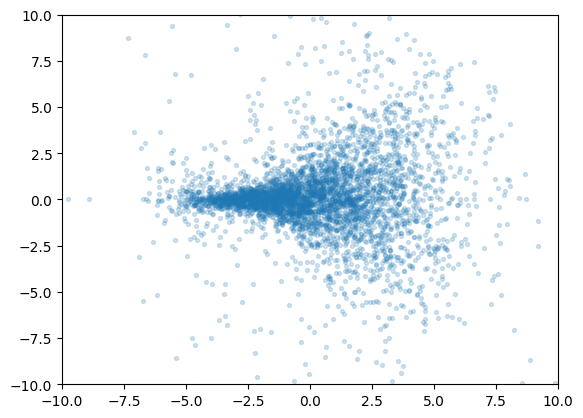}
        \end{subfigure}
        \begin{subfigure}{0.49\linewidth}
            \centering
            \caption{\small{NUTS}}
            \includegraphics[width=0.95\linewidth]{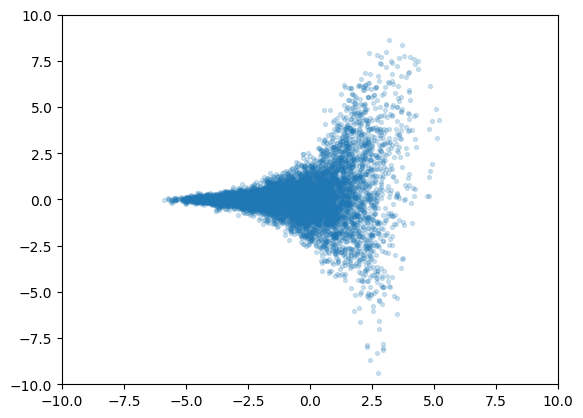}
        \end{subfigure}
        \begin{subfigure}{0.49\linewidth}
            \centering
            \caption{\small{QUICSORT adaptive}}
            \includegraphics[width=0.95\linewidth]{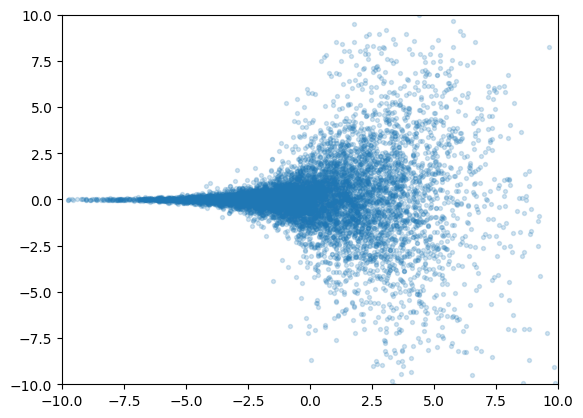}
        \end{subfigure}
        \begin{subfigure}{0.49\linewidth}
            \centering
            \caption{\small{True distribution}}
            \includegraphics[width=0.95\linewidth]{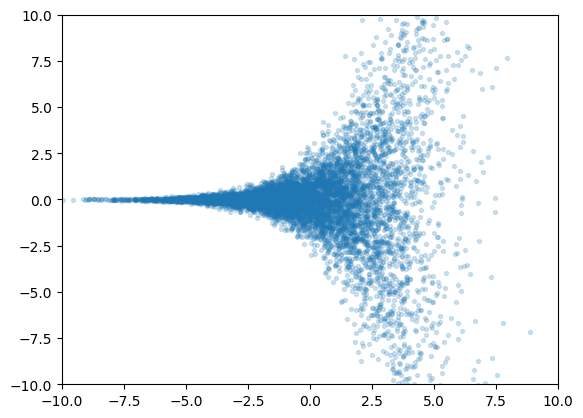}
        \end{subfigure}
    \end{subfigure}%
    \begin{subfigure}{0.45\textwidth}
        \centering
        \includegraphics[width=0.97\linewidth]{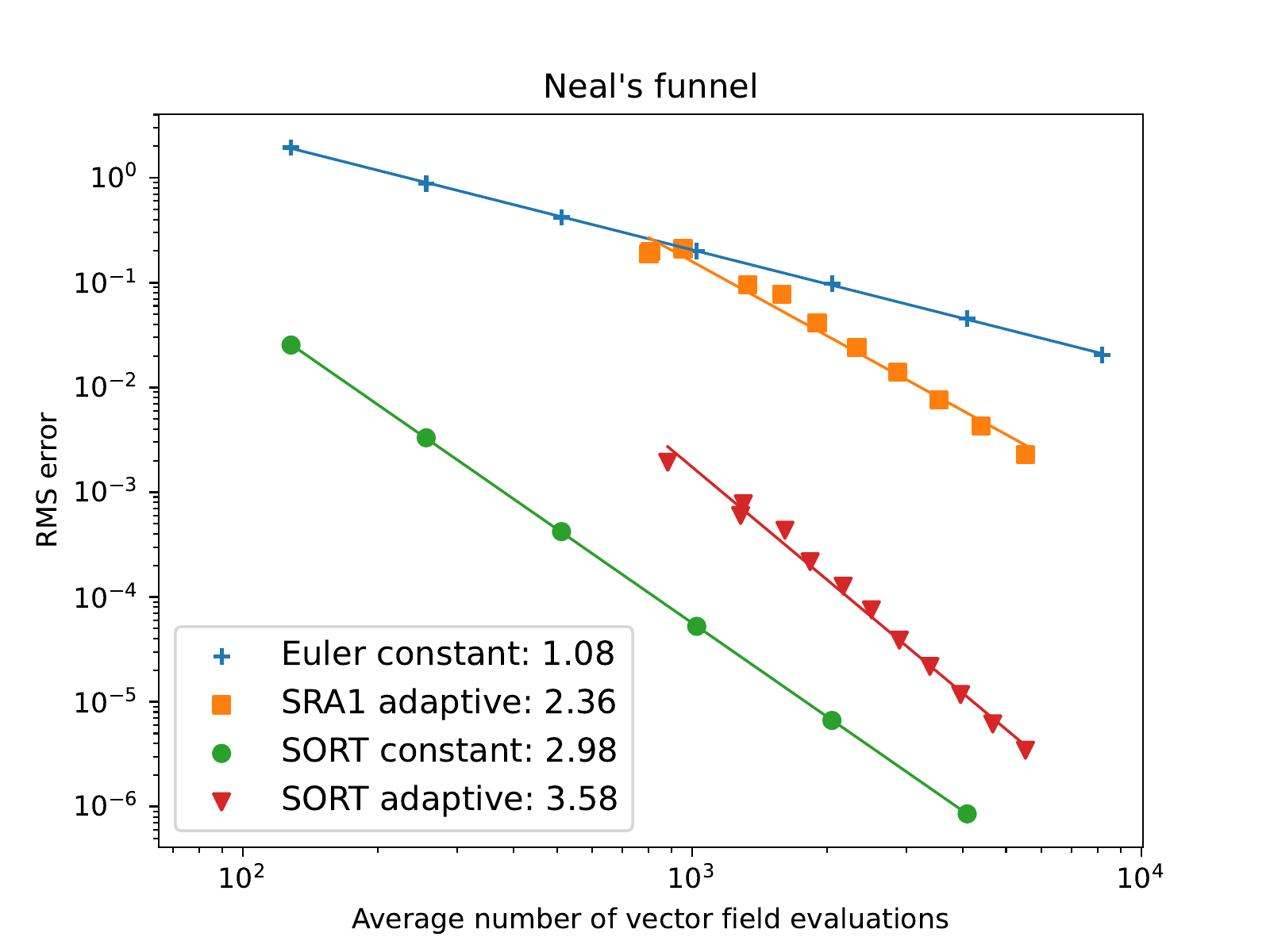}
    \end{subfigure}
    \caption{Left: qualitative comparison of Neal's funnel distributions generated by different methods. \\ Right: The SOC of different solvers on the Neal's funnel Langevin SDE.}
    \label{fig:qualitative_funnel}
\end{figure}

\begin{table}[H]
\centering
\caption{A quantitative comparison of NUTS, and ULD MCMC (with constant or adaptive time-stepping) for sampling from Neal's funnel. The samples were generated using 64 chains of 128 samples, with $\Delta t = 2$ between samples and 16 burn-in iterations. We rescaled the samples by $x' = x/3, \; y' = y e^{-x/2}$, which should result in a 10-dim standard Gaussian. We computed the absolute errors in the mean and the covariance matrix -- we report the maximum and average of the entries. For each marginal we performed the Kolmogorov-Smirnov (KS) test and computed the effective sample size (ESS). Results were averaged over 5 random seeds.}
\label{tab:neals_funnel_results}
\begin{tabular}{cccccccccc}
\toprule
\multirow{ 2}{*}{Method} & \multirow{ 2}{*}{\begin{tabular}{c}$\nabla f$ evals \\ per sample \end{tabular}} & \multicolumn{2}{c}{Mean error} & \multicolumn{2}{c}{Cov error} & \multirow{ 2}{*}{\begin{tabular}{c}Avg KS \\ P-values \end{tabular}} & \multicolumn{2}{c}{ESS}\\
 & & Max & Avg & Max & Avg & & Min & Avg\\
\midrule
NUTS                & 733.7             & 0.71              & 0.40          & 0.49  & 0.14  &  \textbf{0.14} & 0.007 & 0.015 \\
ULD Euler const     & 137.0             & 1.85              & 0.59          & 8e5   & 4e4   & 0.0   & 0.14  & \textbf{0.33}  \\
ULD QUICSORT const  & 274.0             & 0.12              & 0.042         & 1e6    & 6e4   & 0.11   & 0.10  & 0.21  \\
ULD QUICSORT adap   &  \textbf{88.6}    & \textbf{0.073}    & \textbf{0.037}& \textbf{0.40} & \textbf{0.026} & 0.06  & \textbf{0.23}  & 0.24  \\
\bottomrule
\end{tabular}
\end{table}

\subsubsection{Logistic Regression}

We also tested this third-order adaptive Langevin Monte Carlo method on a Bayesian logistic regression task using real-world data. In this task we are given data $x_i \in \reals^d$ and labels $y_i \in \set{0,1}$ (for $i = 1, \ldots, n$) and the goal is to find weights $\theta \in \reals^d,$  such that the relation $\P\big(y_i = 1 | x_i, \theta \big) = \left( 1 + \exp(-\theta^T x_i) \right)^{-1}$ closely approximates the true distribution of the data (in KL divergence). We also include a bias term, so $d$ is 1 larger than the dimensionality of the dataset itself. While the optimal weights can be obtained with simple gradient descent, our aim is to sample from the Bayesian posterior distribution $p(\theta | \DD) = \frac{p(\DD|\theta) p(\theta)}{p(\DD)}$, where $p(\theta) = \normalof{\vzero, \var_{i=1,\ldots,n}(x_i)}$ is the prior and $\DD$ is the data. Computing the posterior is usually infeasible because $p(\DD)$ involves an intractable integral, but we can nevertheless sample from the posterior using MCMC or LMC as above.

We use data from \cite{li2023mied}, which includes 13 datasets of dimensionalities between $d=3$ and $d=61$. We also use the ``Isolet" dataset ($d=617$) from \cite{isolet}, which has 26 classes, but we only use two of those for our logistic regression task. While NUTS (which is considered the gold standard for a good reason) converges faster on most of the low-dimensional datasets, QUICSORT significantly outperforms NUTS on the high dimensional ones (``splice" with $d=61$ and ``Isolet" with $d=617$) and does slightly better on some low dimensional ones as well (``flare solar" with $d=10$, ``image" with $d=19$ and ``titanic" with $d=4$). Furthermore, the adaptive time-stepping version of QUICSORT was able to find the near-optimal step lengths needed for the desired accuracy and thus reached the same accuracy as constant-step QUICSORT, but with fewer evaluations of $\nabla \log p(\theta | \DD)$.

\begin{figure}
    \centering
    \begin{subfigure}{.33\textwidth}
      \centering
      \caption{flare solar ($d=10$)}
      \includegraphics[width=\linewidth]{Images/flare_solar_final.pdf}
    \end{subfigure}%
    \begin{subfigure}{.33\textwidth}
      \centering
      \caption{splice ($d=61$)}
      \includegraphics[width=\linewidth]{Images/splice_final.pdf}
    \end{subfigure}%
    \begin{subfigure}{.33\textwidth}
      \centering
      \caption{isolet ($d=617$)}
      \includegraphics[width=\linewidth]{Images/isolet_ab_final.pdf}
    \end{subfigure}
    \caption{For each of the 14 datasets we ran $2^{15}$ independent chains of NUTS and the same number of Langevin trajectories simulated using the QUICSORT solver. At regular intervals along the run we compared these $2^{15}$ particles against the ``ground truth" samples, which were obtained using NUTS with a sufficiently long burn-in period. We plot the energy distance \citep{szekely2002energy} between the obtained samples and the ground truth samples on the y-axis and the number of times $\nabla f$ was evaluated up to that point in the run on the x-axis. We repeated the experiment five times and presented the standard deviation as the shaded region in the plot. We present 3 of the 14 plots here and the rest can be found at the GitHub link above.}
    \label{fig:logreg convergence plot}
\end{figure}

\section{Discussion and open directions}

The Langevin Monte Carlo algorithm based on the third order solver QUICSORT in \cref{sec:application:langevin} yielded surprisingly good results using very few gradient evaluations. While the QUICSORT algorithm is probably not Metropolis adjustable, there is potential to combine other MCMC approaches with adaptive stepping. For example in \cite{Leroy2024AdaptiveLangevin}, the authors rescale the time-dimension of the Langevin equation in a way that makes time pass slower in regions of higher volatility. However, unlike ours, their approach does not monitor how much error the solver makes, so combining their Monte Carlo techniques with our improved adaptive time-stepping approach could lead to promising results. 

Another possible future application of our method is Multilevel Monte Carlo \citep{Giles08MLMC}, which has long been a staple in the MCMC toolbox. It relies on being able to solve an SDE twice with the same underlying Brownian path, but with different discretizations. This can easily be accomplished when time-steps are constant and one discretization is exactly twice as fine as the other, but if we are using the standard way of generating Brownian motion it would be infeasible to combine this with adaptive time-stepping. However, since VBT always generates the same path given a fixed seed, our method enables combining MLMC with both adaptive time-stepping and high-order SDE solvers.

MCMC methods normally rely on having very long chains, since they exactly match the target (stationary) distribution only at $t=\infty$. Getting a good estimate on how close the chain is to stationarity is important for determining the burn-in period. Chain couplings were used by \cite{biswas2019estimating} for estimating convergence and by \cite{jacob2019unbiased} to achive stationarity at a finite time. \cite{chada2023unbiased} extends this to underdamped/kinetic Langevin Monte Carlo by using coupled SDE trajectories which start at different points and use different (constant) step sizes, but are driven by the same noise. Adding adaptive stepping to this construction is a promising application for the Virtual Brownian Tree, since it can guarantee the same Brownian path is provided to both chains, even if the solver uses different step sizes on each of them.

\subsection*{Acknowledgements}
AJ thanks the UK National Cyber Security Centre for funding his research and Extropic.ai for funding his work on implementing the Virtual Brownian Tree and high order SDE solvers in Diffrax. AJ and JF would also like to acknowledge support from the University of Bath, the Maths4DL programme under EPSRC grant EP/V026259/1 and the Alan Turing Institute. JF was previously supported by the EPSRC Programme Grant ``DataSig" EP/S026347/1.

\printbibliography
\clearpage

\appendix

\section{Updated \texttt{bridge} and \texttt{final\_interpolation} functions}

\label{appendix:updated_bridge_finalinterp}

\begin{algorithm}[H]
\caption{Updated \bridge, based on theorems \ref{cor:theory:midpoint_wh} and \ref{thm:theory:midpoint_whk}.}\label{alg:appendix:bridge}

\hspace*{2.5mm} \textbf{Input:} $l$ - depth in tree, $s$ - start of the interval, $\bar{Y}_s, \bar{Y}_u, \bar{Y}_{s,u}$ - BM and L\'{e}vy areas, $\widehat{\rho}$ - PRNG seed
\vspace{1mm}
\begin{algorithmic}
\setstretch{1.5}
\If{$Y = (W, H)$}
    \State \textbf{for} $x \in \set{s, u, [s,u]}$ \textbf{let} $\big( W_x, \bar{H}_x \big) \gets \bar{Y}_x$
    \State $\widehat{\rho}_1, \widehat{\rho}_2 \gets \splitseed(\widehat{\rho}, 2)$
    \State $Z \gets \normalof{0, \frac{1}{16} (u-s) \eye_d, \widehat{\rho}_1}$
    \State $N \gets \normalof{0, \frac{1}{12} (u-s) \eye_d, \widehat{\rho}_2}$
    \State $W_{s,t} \gets \frac{1}{2} W_{s,u} + \frac{3}{2 (u-s)} \bar{H}_{s,u} + Z $, \quad  $W_{t,u} \gets \frac{1}{2} W_{s,u} - \frac{3}{2 (u-s)} \bar{H}_{s,u} - Z $
    \State $\bar{H}_{s,t} \gets \frac{1}{8} \bar{H}_{s,u} - \frac{u-s}{4} Z + \frac{u-s}{4} N $, \quad  $\bar{H}_{t,u} \gets \frac{1}{8} \bar{H}_{s,u} - \frac{u-s}{4} Z - \frac{u-s}{4} N $
    \State $W_t \gets W_s + W_{s,t}$
    \State $\bar{H}_t \gets \bar{H}_s + \bar{H}_{s,t} + \frac{1}{2} \left( t W_s - s W_t \right)$
    \State \textbf{for} $x \in \set{t, [s,t], [t,u]}$ \textbf{let} $\bar{Y}_x \gets \big( W_x, \bar{H}_x \big)$
\Else  \Comment{in this case $Y = (W, H, K)$}
    \State \textbf{for} $x \in \set{s, u, [s,u]}$ \textbf{let} $\big( W_x, \bar{H}_x, \bar{K}_x \big) \gets \bar{Y}_x$
    \State $\widehat{\rho}_1, \widehat{\rho}_2, \widehat{\rho}_3 \gets \splitseed(\widehat{\rho}, 3)$
    \State $Z \gets \normalof{0, \frac{1}{16} (u-s) \eye_d, \widehat{\rho}_1}$
    \State $X_1 \gets \normalof{0, \frac{1}{768} (u-s) \eye_d, \widehat{\rho}_2}$
    \State $X_2 \gets \normalof{0, \frac{1}{2880} (u-s) \eye_d, \widehat{\rho}_3}$
    \State $W_{s,t} \gets \frac{1}{2} W_{s,u} + \frac{3}{2 (u-s)} \bar{H}_{s,u} + Z$, \quad $W_{t,u} \gets \frac{1}{2} W_{s,u} - \frac{3}{2 (u-s)} \bar{H}_{s,u} - Z$
    \State $\bar{H}_{s,t} \gets \frac{1}{8} \bar{H}_{s,u} + \frac{15}{8 (u-s)} \bar{K}_{s,u} - \frac{u-s}{4} Z + \frac{u-s}{2} X_1$, \quad  $\bar{H}_{t,u} \gets \frac{1}{8} \bar{H}_{s,u} - \frac{15}{8 (u-s)} \bar{K}_{s,u} - \frac{u-s}{4} Z - \frac{u-s}{2} X_1$
    \State $\bar{K}_{s,t} \gets \frac{1}{32} \bar{K}_{s,u} - \frac{(u-s)^2}{8} X_1 + \frac{(u-s)^2}{4} X_2$, \quad  $\bar{K}_{t,u} \gets \frac{1}{32} \bar{K}_{s,u} - \frac{(u-s)^2}{8} X_1 - \frac{(u-s)^2}{4} X_2$
    \State $W_t \gets W_s + W_{s,t}$
    \State $B_s^{0,t} \gets W_s - \frac{s}{t} W_t$
    \State $\bar{H}_t \gets \bar{H}_s + \bar{H}_{s,t} + \frac{t}{2} B_s^{0,t}$
    \State $\bar{K}_t \gets \bar{K}_s + \bar{K}_{s,t} + \frac{t-s}{2} \bar{H}_s - \frac{s}{2} \bar{H}_{s,t} + \frac{(t-s)^2 - s^2}{12} B_s^{0,t}$
    \State \textbf{for} $x \in \set{t, [s,t], [t,u]}$ \textbf{let} $\bar{Y}_x \gets \big( W_x, \bar{H}_x, \bar{K}_x \big)$
\EndIf

\State \Return $\bar{Y}_t, \; \bar{Y}_{s,t}, \; \bar{Y}_{t,u}$
\end{algorithmic}
\end{algorithm}

\begin{algorithm}[H]
\caption{Updated \finalinterp, based on theorems \ref{thm:theory:arbitrary_wh} and \ref{thm:theory:arbitrary_whk}.}\label{alg:appendix:final_interp}

\hspace*{2.5mm} \textbf{Input:} $s$ - start time, $r$ - target time, $u$ - end time, $\bar{Y}_s, \bar{Y}_u, \bar{Y}_{s,u}$ - BM and L\'{e}vy areas, $\widehat{\rho}$ - PRNG seed
\vspace{1mm}
\begin{algorithmic}
\setstretch{1.5}
\If{$Y$ = $(W, H)$}
    \State \textbf{for} $x \in \set{s, u, [s,u]}$ \textbf{let} $\big( W_x, \bar{H}_x \big) \gets \bar{Y}_x$
    \State $\widehat{\rho}_1, \widehat{\rho}_2 \gets \splitseed(\widehat{\rho}, 2)$
    \State $X_1 \gets \normalof{0, \eye_d, \widehat{\rho}_1}$
    \State $X_2 \gets \normalof{0, \eye_d, \widehat{\rho}_2}$
    \State $a \gets \frac{\left(r-s\right)^{\frac{7}{2}}\left(u-r\right)^{\frac{1}{2}}}{2(u-s)\sqrt{\left(r-s\right)^{3}+\left(u-r\right)^{3}}} $
    \State $b \gets \frac{\left(r-s\right)^{\frac{1}{2}}\left(u-r\right)^{\frac{7}{2}}}{2(u-s)\sqrt{\left(r-s\right)^{3}+\left(u-r\right)^{3}}} $
    \State $c \gets \frac{\sqrt{3}\left(r-s\right)^{\frac{3}{2}}\left(u-r\right)^{\frac{3}{2}}}{6\sqrt{\left(r-s\right)^{3} + \left(u-r\right)^{3}}} $
    \State $W_{s,r} \gets \frac{r-s}{u-s} W_{s,u} + 6 \frac{(r-s)(u-r)}{(u-r)^3} \bar{H}_{s,u} + 2 \frac{a+b}{u-s} X_1$
    \State $\bar{H}_{s,r} \gets \left( \frac{r-s}{u-s} \right)^3 \bar{H}_{s,u} - a X_1 + c X_2$
    \State $W_r \gets W_s + W_{s,r}$
    \State $\bar{H}_r \gets \bar{H}_s + \bar{H}_{s,r} + \frac{1}{2} \left( r W_s - s W_r \right)$
    \State $\bar{Y}_r \gets \big( W_r, \bar{H}_r \big)$
\Else   \Comment{in this case $Y = (W, H, K)$}
    \State \textbf{for} $x \in \set{s, u, [s,u]}$ \textbf{let} $\big( W_x, \bar{H}_x, \bar{K}_x \big) \gets \bar{Y}_x$
    \State $H_{s,u} \gets \frac{1}{u-s} \bar{H}_{s,u}$ \quad $K_{s,u} \gets \frac{1}{(u-s)^2} \bar{K}_{s,u}$
    \State $\tilde{W}_{s,r} \gets \frac{r-s}{u-s} W_{s,u} + 6 \frac{(r-s)(u-r)}{(u-s)^2} H_{s,u} + 120 \frac{(r-s)(u-r)(\frac{u+s}{2} - r)}{(u-s)^3} K_{s,u}$
    \State $\tilde{H}_{s,r} \gets \frac{(r-s)^2}{(u-s)^2} H_{s,u} + 30 \frac{(r-s)^2(u-r)}{(u-s)^3} K_{s,u}$
    \State $\tilde{K}_{s,r} \gets \frac{(r-s)^3}{(u-s)^3} K_{s,u}$
    \State $\big( \hat{W}_{s,r}, \hat{H}_{s,r}, \hat{K}_{s,r} \big) \gets \normalof{0, \boldsymbol{\Sigma}_r, \widehat{\rho}}$ \quad where $\boldsymbol{\Sigma}_r$ is as in equation \ref{eq:theory:general_times:whk:sigma}
    \State $W_{s,r} \gets \tilde{W}_{s,r} + \hat{W}_{s,r}$
    \State $\bar{H}_{s,r} \gets (r-s) \big( \tilde{H}_{s,r} + \hat{H}_{s,r} \big)$
    \State $\bar{K}_{s,r} \gets (r-s)^2 \big( \tilde{K}_{s,r} + \hat{K}_{s,r} \big)$
    \State $W_r \gets W_s + W_{s,r}$
    \State $B_s^{0,r} \gets W_s - \frac{s}{r} W_r$
    \State $\bar{H}_r \gets \bar{H}_s + \bar{H}_{s,r} + \frac{r}{2} B_s^{0,r}$
    \State $\bar{K}_r \gets \bar{K}_s + \bar{K}_{s,r} + \frac{r-s}{2} \bar{H}_s - \frac{s}{2} \bar{H}_{s,r} + \frac{(r-s)^2 - s^2}{12} B_s^{0,r}$
    \State $\bar{Y}_r \gets \big( W_r, \bar{H}_r, \bar{K}_r \big)$
\EndIf

\State \Return $\bar{Y}_r$
\end{algorithmic}
\end{algorithm}

\section{Proofs of results from Section \ref{sec:theory}}
\label{sec:proofs}

\subsection{Proof of Chen's relation for (W, H, K)}
\label{sec:proofs:chen}

\chenrel*

\begin{figure}[h]
\begin{center}
\includegraphics[width=0.7\textwidth]{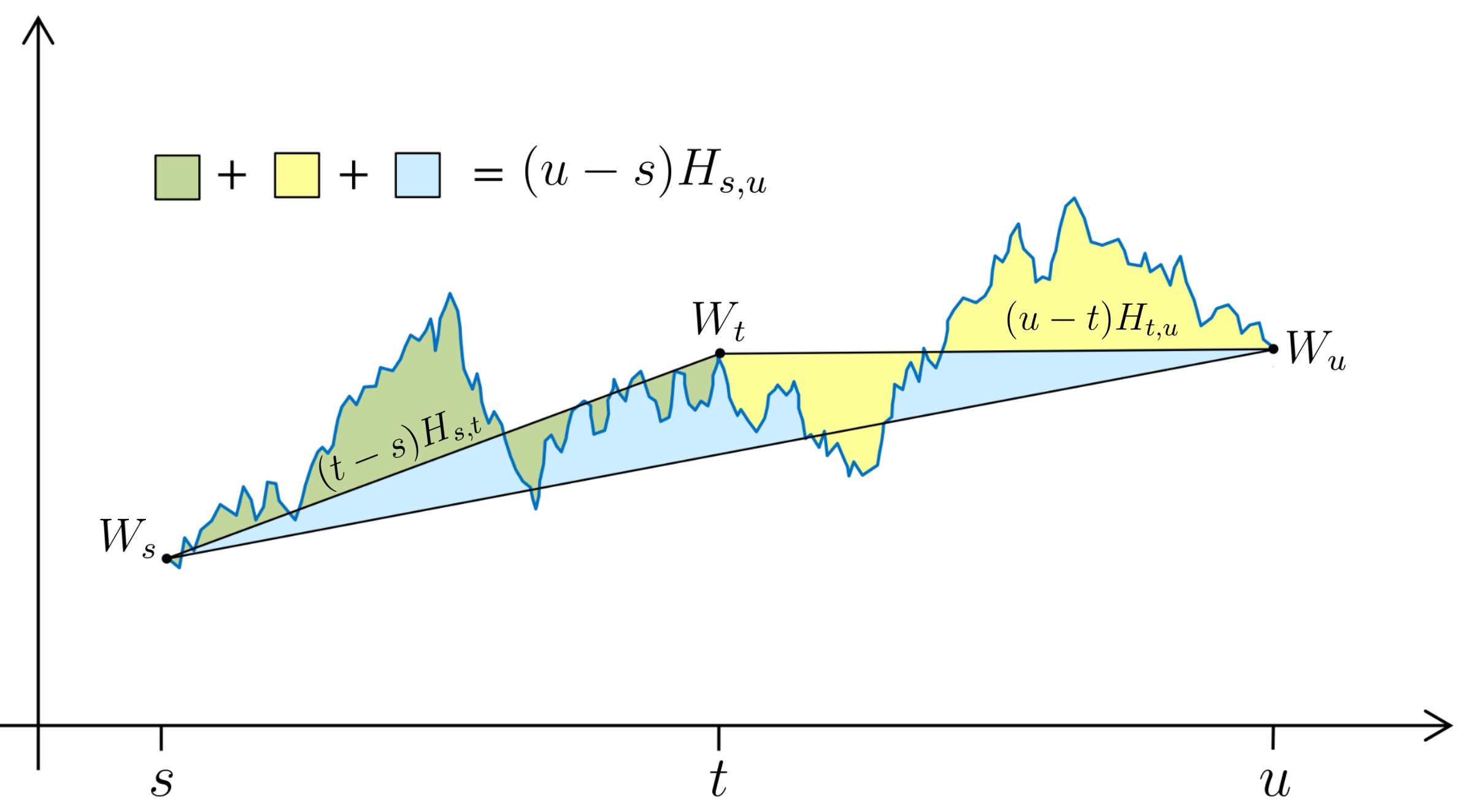}
\caption{Chen's relation for space-time L\'{e}vy area (from \cite{foster2020a}).}
\label{fig:wh_midpoint}
\end{center}
\end{figure}

\begin{proof}
    The relation for $\barH$ appears in \cite{foster2020a} and can be easily deduced using figure \ref{fig:wh_midpoint}. We will derive the identity for $\barK$, which is new here.
    
    Let $0 \leq s < t < u$ be some times. Recall \cref{eq:theory:arbitrary_whk:bridge_bridge}:
    \begin{align*}
    \begin{split}
        B_r^{s,t} &= B_r^{s,u} - \frac{r-s}{t-s} B_t^{s,u} \quad \text{for } \; r \in [s,t] \\
        B_r^{t,u} &= B_r^{s,u} - \frac{u-r}{u-t} B_t^{s,u} \quad \text{for } \; r \in [t,u].
    \end{split}
    \end{align*}

    Substituting this into the definition of $\bar{K}_{s,u} = \int_s^u B_r^{s,u} \left( \frac{u+s}{2} - r \right) \, dr$ gives
    \begin{align*}
        \barK_{s,t} + \barK_{t,u} &= \int_s^t B_r^{s,t} \left( \frac{t+s}{2} - r \right) \, dr + \int_s^t B_r^{t,u} \left( \frac{u+t}{2} - r \right) \, dr \\
        &= \int_s^u B_r^{s,u} \left( \frac{u+s}{2} - r \right) \, dr - \int_s^t B_r^{s,t} \frac{u-t}{2} \, dr + \int_s^t \left( - \frac{r-s}{t-s} B_t^{s,u} \right) \left( \frac{u+s}{2} - r \right) \, dr \\
        & \quad + \int_t^u B_r^{t,u} \frac{t-s}{2} \, dr - \int_t^u \left( - \frac{u-r}{u-t} B_t^{s,u} \right) \left( \frac{u+s}{2} - r \right) \, dr \\
        &= \barK_{s,u} - \frac{u-t}{2} (t-s) H_{s,t} - \frac{1}{12}(t-s)(s + 3u - 4t) B_t^{s,u} \\
        & \quad + \frac{t-s}{2} (u-t) H_{t,u} - \frac{1}{12}(u-t)(3s + u - 4t) B_t^{s,u} \\
        &= \barK_{s,u} - \frac{u-t}{2} \barH_{s,t} + \frac{t-s}{2} \barH_{t,u} - \frac{(u-t)^2 - (t-s)^2}{12} B_t^{s,u}
    \end{align*}
    as required.
\end{proof}

\subsection{Proof of the Midpoint update rule for (W, H, K)}
\label{pf:midpoint_whk}

\midpointWHK*

\begin{proof}
We will prove this as a special case of \cref{thm:theory:arbitrary_whk}, in which we set $r = t = \frac{s+u}{2}$ to obtain
\begin{align}
\label{eq:proofs:midpoint_whk:mean}
\begin{split}
     \E[W_{s,t} | Y_{s,u}] &= \frac{1}{2} W_{s,u} + \frac{3}{2} H_{s,u}, \\[1mm]
    \E[H_{s,t} | Y_{s,u}] &= \frac{1}{4} H_{s,u} + \frac{15}{4} K_{s,u}, \\[1mm]
    \E[K_{s,r} | Y_{s,u}] &= \frac{1}{8} K_{s,u},
\end{split}
\quad
\begin{split}
    \vSigma_t  = (u-s) \begin{bmatrix}
        \frac{1}{16} & - \frac{1}{32} & 0 \\[1mm]
        - \frac{1}{32} & \frac{13}{768} & - \frac{1}{1536} \\[1mm]
        0 & - \frac{1}{1536} & \frac{31}{46080}
    \end{bmatrix}.
\end{split}
\end{align}
The Cholesky decomposition $\vSigma_t = L L^T$ gives
\begin{align}
\label{eq:proofs:midpoint_whk:Cholesky}
\begin{split}
    L  = \sqrt{(u-s)} \begin{bmatrix}
        \frac{1}{4} & 0 & 0 \\
        - \frac{1}{8} & \frac{1}{\sqrt{768}} & 0 \\
        0 & - \frac{1}{\sqrt{3072}} & \frac{1}{\sqrt{2880}}
    \end{bmatrix}.
\end{split}
\end{align}

Combining equations \eqref{eq:proofs:midpoint_whk:mean} and \eqref{eq:proofs:midpoint_whk:Cholesky} gives precisely the desired values of $W_{s,t}, H_{s,t}$ and $K_{s,t}$. We then obtain $W_{t,u}, H_{t,u}$ and $K_{t,u}$ via Chen's relation.

\end{proof}

An alternative proof can be found on page 144 of \cite{foster2020a}.

\subsection{Proof of the general Brownian bridge for (W, H, K)}
\label{sec:proofs:arbitrary_whk}

\arbitraryWHK*

\begin{proof}
    Fix $0 \leq s < u$ and let $r \in [s,u]$.
    Since the vectors $Y_{s,r}$ and $Y_{s,u}$ are jointly Gaussian, we can decompose $Y_{s,r}$ into
    \begin{equation*}
        Y_{s,r} = \widetilde{Y}_{s,r} + \widehat{Y}_{s,r},
    \end{equation*}
    where $\widehat{Y}_{s,r}$ is a zero-mean Gaussian vector independent of $Y_{s,u}$, and 
    $\widetilde{Y}_{s,r} = \E[Y_{s,r} | Y_{s,u}]$
    is a deterministic function of $Y_{s,u}$. To compute $\widetilde{Y}_{s,r}$, we use the Brownian polynomial from 
    \citep{foster2020optimalPoly}:
    \begin{equation}
    \label{eq:theory:brownian_cubic}
        \widetilde{W}_{s,r} \coloneqq \E[W_{s,r} | Y_{s,u}] = \frac{r-s}{u-s} W_{s,u} + 6 \frac{(r-s)(u-r)}{(u-s)^2} H_{s,u} + 120 \frac{(r-s)(u-r)(\frac{u+s}{2} - r)}{(u-s)^3} K_{s,u}.
    \end{equation}
    
    Recall that the Brownian bridge is defined as $B^{s,u}_r \coloneqq W_{s,r} - \frac{r-s}{u-s} W_{s,u}$ with the convention that $B^{s,s}_s \coloneqq 0$. Rearranging gives the following identities:
    \begin{align}
    \label{eq:theory:arbitrary_whk:bridge_bridge}
    \begin{split}
        B_q^{s,r} &= B_q^{s,u} - \frac{q-s}{r-s} B_r^{s,u} \quad \text{for } \; q \in [s,r], \\
        B_q^{r,u} &= B_q^{s,u} - \frac{u-q}{u-r} B_r^{s,u} \quad \text{for } \; q \in [r,u].
    \end{split}
    \end{align}
    
    We can use \cref{eq:theory:brownian_cubic} to compute
    \begin{align*}
        \widetilde{B}_r^{s,u} &\coloneqq \E[B_r^{s,u} | Y_{s,u}] = \widetilde{W}_{s,r} - \frac{r-s}{u-s} \widetilde{W}_{s,u} \\
        &= \frac{(r-s)(u-r)}{(u-s)^2} \left( 6 H_{s,u} + 60 K_{s,u} \right) - 120 \frac{(r-s)^2(u-r)}{(u-s)^3} K_{s,u},
    \end{align*}
    which, when combined with \cref{eq:theory:arbitrary_whk:bridge_bridge}, gives
    \begin{align*}
        \widetilde{B}_q^{s,r} &\coloneqq \E[B_q^{s,r} | Y_{s,u}] \\[1mm]
        &= \frac{(q-s)(r-q)}{(u-s)^2} \left( 6 H_{s,u} + 60 K_{s,u} \right) + 120 \frac{(q-s) \left( (u-r)(r-s) - (u-q)(q-s) \right)}{(u-s)^3} K_{s,u}. 
    \end{align*}
    
    Substituting this into the definition of $H_{s,r}$ and $K_{s,r}$, we obtain a formula for $\widetilde{Y}_{s,r}$:
    \begin{align}
    \label{eq:theory:tildeY_{s,r}}
    \begin{split}
        \widetilde{W}_{s,r} &= \frac{r-s}{u-s} W_{s,u} + 6 \frac{(r-s)(u-r)}{(u-s)^2} H_{s,u} + 120 \frac{(r-s)(u-r)(\frac{u+s}{2} - r)}{(u-s)^3} K_{s,u}, \\[3mm]
        \widetilde{H}_{s,r} &\coloneqq \E[H_{s,r} | Y_{s,u}] = \frac{1}{r-s} \int_s^r \widetilde{B}_q^{s,r} \, dq \\[1mm]
        &= \frac{(r-s)^2}{(u-s)^2} H_{s,u} + 30 \frac{(r-s)^2(u-r)}{(u-s)^3} K_{s,u}, \\[3mm]
        \widetilde{K}_{s,r} &\coloneqq \E[K_{s,r} | Y_{s,u}] = \frac{1}{(r-s)^2} \int_s^r \widetilde{B}_q^{s,r} \left( \frac{r+s}{2} - q \right) \, dq \\[1mm]
        &= \frac{(r-s)^3}{(u-s)^3} K_{s,u}.
    \end{split}
    \end{align}
    
    All that remains is to compute the covariance matrix of $\widehat{Y}_{s,r}$, which depends only on $s, r$, and $u$.
    Since $\left( Y_{s,r} \right)_{r \geq s}$ is a Gaussian process, we have
    \begin{equation}
        \vSigma_r \coloneqq \E[\widehat{Y}_{s,r} \widehat{Y}_{s,r}^T] = \E[Y_{s,r} Y_{s,r}^T] - \E[ \widetilde{Y}_{s,r} \widetilde{Y}_{s,r}^T ].
    \end{equation}
    
    By construction $W_{s,r}, H_{s,r},$ and $K_{s,r}$ are independent (see \cite{foster2020a} for details). Hence $\E[Y_{s,r} Y_{s,r}^T]$ is diagonal with the following entries
    \begin{equation}
        \var \! \left( W_{s,r} \right) = r-s, \quad \var \! \left( H_{s,r} \right) = \frac{r-s}{12}, \quad \var \! \left( K_{s,r} \right) = \frac{r-s}{720}.
    \end{equation}
    
    To compute the matrix $\E[ \widetilde{Y}_{s,r} \widetilde{Y}_{s,r}^T ]$ we can just multiply each line of \cref{eq:theory:tildeY_{s,r}} with each other and then use the independence of $W_{s,u}$, $H_{s,u}$ and $K_{s,u}$.
    
    Taking the difference $\E[Y_{s,r} Y_{s,r}^T] - \E[ \widetilde{Y}_{s,r} \widetilde{Y}_{s,r}^T ]$ gives the desired coefficients.
\end{proof}

\end{document}